\documentclass[10pt, a4paper, leqno]{article}
\usepackage[intlimits]{amsmath}
\usepackage{amsthm,amssymb,amsfonts,fancyhdr,verbatim,graphicx,bbold,color}

\pagestyle{fancy}
\setlength{\headheight}{15pt}
\lhead{\bfseries }
\chead{Forward-backward heat equation}
\rhead{}
\lfoot{}
\cfoot{Page \thepage}
\rfoot{}

%

%
\numberwithin{equation}{section}
\theoremstyle{plain}
\newtheorem{thm}{Theorem}
\newtheorem{lem}[thm]{Lemma}

\theoremstyle{remark}

\newcommand{\Rbb}{{\mathbb{R}}}
\newcommand{\Cbb}{{\mathbb{C}}}
\newcommand{\Nbb}{{\mathbb{N}}}
\newcommand{\spec}{{\mathrm{spec}}}
\newcommand{\ACl}{\mathrm{AC}_{\mathrm{loc}}}
\newcommand{\AC}{\mathrm{AC}}
\newcommand{\Dmax}{\mathcal{D}_\mathrm{max}}
\newcommand{\sch}{\mathcal{C}}
\newcommand{\Lper}{L_\varepsilon}
\newcommand{\DomL}{\mathrm{Dom}(\Lper)}
\newcommand{\spa}{\mathrm{span}}

\newcommand{\be}{\begin{equation}}
\newcommand{\ee}{\end{equation}}
\newcommand{\beu}{\begin{equation*}}
\newcommand{\eeu}{\end{equation*}}
\newcommand{\besu}{\begin{equation*}
\begin{aligned}}
\newcommand{\eesu}{\end{aligned}
\end{equation*}}
\newcommand{\bes}{\begin{equation}
\begin{aligned}}
\newcommand{\ees}{\end{aligned}
\end{equation}}

\newcommand{\Rr}{\mathbb{R}}
\newcommand{\Nn}{\mathbb{N}}
\newcommand{\Cc}{\mathbb{C}}
\newcommand{\loc}{\mathrm{loc}}
\newcommand{\CT}{\mathcal{AT}}
\newcommand{\dom}{\operatorname{Dom}}
\newcommand{\domlper}{\dom(\Lper)}
\renewcommand{\chi}{\mathbb{1}}
\newcommand{\elle}{\ell_\varepsilon}

\newcommand{\Imi}{(-\pi,0)}
\newcommand{\Ipl}{(0,\pi)}
\newcommand{\Ito}{(-\pi,\pi)}

\title{On the stability of a forward-backward heat equation}

\date{31 October 2011}

\author{Lyonell Boulton$^1$ \and Marco Marletta$^2$ \and David Rule$^1$}

\begin{document}
\maketitle

\footnotetext[1]{Department of Mathematics and Maxwell Institute for the Mathematical Sciences, Heriot-Watt University, Edinburgh, EH14 4AS, UK.}

\footnotetext[2]{Cardiff School of Mathematics, Cardiff University,
Senghennydd Road, Cardiff CF24 4AG, UK.}

\begin{abstract}
In this paper we examine spectral properties of a family of periodic 
singular Sturm-Liouville problems which are highly non-self-adjoint but 
have purely real spectrum. The problem originated from the study of the lubrication approximation of a viscous fluid film in the inner surface of a rotating cylinder and has received a substantial amount of attention in recent years. Our main focus will be the determination of Schatten 
class inclusions for the resolvent operator and regularity properties 
of the associated evolution equation. 
\end{abstract}

\tableofcontents

\section{Introduction} \label{section1}
Let $f(x)=\sin(x)$, $c=f'(0)$ and  $0 < \varepsilon < 2/c$. The forward-backward heat equation
\be   \label{fb_eq}
    \left\{
    \begin{aligned} 
    &\partial_t u(t,x) +\elle [u](t,x)=0 \qquad&  x\in\Ito,\, t\in [0, T) \\
    & u(0,x)=g(x) & x\in\Ito \\
    & u(t,-\pi)= u(t,\pi) & t\in[0,T)
    \end{aligned} \right.
\ee
associated to the singular Sturm-Liouville differential operator
\beu
\elle[u](x) = \varepsilon(fu')'(x) +u'(x)   
\eeu
originated in applications from hydrodynamics \cite{2003Benilov}, and it
has recently attracted some interest due to various unusual stability and symmetry properties. Spectral properties of $\elle$  were examined simultaneously in various works by Chugunova, Karabash, Pelinovsky and Pyatkov \cite{2008Chugunovaetal,2009Chugunovaetal}, and Davies and Weir \cite{2007Davies2,2009Daviesetal,2009Weir,2009Weir2,2009Weir3}.  Remarkably it was noted that the associated closed operator $\Lper:\domlper \longrightarrow L^2\Ito$, defined on a suitable domain reproducing the singularities and boundary conditions, has a purely discrete spectrum comprising conjugate pairs lying on the imaginary axis and accumulating only at $\pm i \infty$. This comes as a surprise at first sight, hinting that perhaps the dominant part of $\elle$ for fixed $\varepsilon$ is the antisymmetric term $u'(x)$. In reality, this is not a consequence of this suggestion, but rather a consequence of a delicate balance in obvious and hidden symmetries of the associated eigenvalue problem. Later it was shown \cite{2010BLM,2010BLM2} that this, and other remarkable spectral properties, also hold for more general $f(x)$.

Eigenvalue asymptotics for $\Lper$ were investigated in detail by Davies and Weir. For fixed $\varepsilon$, the leading order of the counting function is $2$, the same as for regular Sturm-Liouville problems. This rules out dominance of $u'(x)$ for fixed $\varepsilon$. It should be noted, however, that this term becomes loosely speaking ``dominant'' in the small  $\varepsilon$ regime. The  $n$th conjugate pair of eigenvalues of $\Lper$ converges to $\pm i n$ as $\varepsilon\to 0$. Deducing this latter property is far from straightforward, as the perturbation at $\varepsilon=0$ becomes singular.
 
Asymptotics of the counting function are closely linked with trace properties of the resolvent via Lidskii's theorem.
In the present paper we consider the same problem \eqref{fb_eq} but replacing $\sin(x)$ by a more general $f:\Rbb \longrightarrow \Rbb$ assuming that it is
\begin{enumerate}
\item \label{c1} absolutely continuous and $2\pi$-periodic, 
\item \label{c2} differentiable except possibly at a finite number of points excluding integer multiples of $\pi$,
\item \label{c3} $c=f'(0)\not=0$ exists and $f(x) = cx + O(x^{1+\delta})$ for some $\delta>0$ near $x=0$, and
\item \label{c4} $f(x+\pi) = -f(x)$, $f(-x) = -f(x)$ and $f(x) > 0$ for all $x \in \Ipl$.
\end{enumerate}
 In Theorem~\ref{schatten} we show that the resolvent of $\Lper$ lies in the $p$ Schatten-von Neumann class for all $p>2/3$. In Theorem~\ref{infinitely_many} we show that
it always has infinitely many eigenvalues. Both these results extend those of \cite{2010BLM2} and \cite[Proposition~4.3]{2009Chugunovaetal}.

The operator $i\Lper$ is not similar to a self-adjoint one in the case $f(x)=\sin(x)$. This is a direct consequence of the fact that the eigenfunctions and associated functions of $\Lper$ do not form an unconditional basis of $L^2\Ito$, \cite{2007Davies2, 2009Chugunovaetal}. In Theorem~\ref{non-unconditional-basis} below
we show that this also holds true for the more general $f$.

Basis properties of the eigenfunctions and associated functions of $\Lper$ are closely related with existence properties of solutions for the evolution problem \eqref{fb_eq}. As a forward-backward evolution problem, the regularity of these solutions is in itself unusual and hence worth examining. After setting a rigourous framework for solutions of \eqref{fb_eq}, we show in Theorem~\ref{singularity_I+} 
a non-existence result for any initial direction $g(x)$ which is not sufficiently regular. 
This property was examined in \cite{2009Chugunovaetal} for $f(x)=\sin(x)$.

\section{The inhomogeneous time independent equation} \label{section2}

We first establish a rigourous operator-theoretic framework for the differential expression $\elle[u]$. 

We will call a function $u:\Ito\longrightarrow \Cbb$ \emph{admissible} iff
\[
u \in \ACl\big(\Imi \cup \Ipl\big)\qquad \text{and} \qquad fu' + u \in \ACl\big(\Imi \cup \Ipl\big).
\]
Here and below
$\ACl(J)$ denotes the space of absolutely continuous functions 
on any open sub-interval of $J$. Let us examine the inhomogeneous problem
\be \label{inhom}
\elle[u] = F
\ee
for $F \in L^2\Ito$ and $u$ admissible. Consider the integrating factor equation
\be \label{intfact}
\frac{p'}{p} = \frac{f'}{f} + \frac{1}{\varepsilon f}.
\ee
Making the substitution $p = e^q$ gives $q' = \frac{f'}{f} + \frac{1}{\varepsilon f}$ and so
\beu
q(\pm x) = \int_{\pm\pi/2}^{\pm x} \left(\frac{f'(y)}{f(y)} + \frac{1}{\varepsilon f(y)}\right)dy + c_1^\pm \qquad \qquad \forall x \in\Ipl.
\eeu
Therefore,
\be \label{intfact-form}
p(\pm x) = c_2^\pm e^{\int_{\pm\pi/2}^{\pm x} \left(\frac{f'(y)}{f(y)} + \frac{1}{\varepsilon f(y)}\right)dy} \qquad \qquad \forall x \in \Ipl.
\ee
Thus $p$ is a non-vanishing function in $\ACl(\Imi \cup \Ipl)$. Multiplying by $p$ transforms \eqref{inhom} into another Sturm-Liouville equation in divergent form. Here we can actually pick any value of $c_2^\pm \in \Cc$, so the sign of $p$ can be fixed in $\Ipl$ and $\Imi$ separately.

\begin{lem} \label{sol-equiv}
Let $F \in L^1_\loc\Ito$. An admissible function $u$ satisfies \eqref{inhom} almost everywhere in $\Ito$, if and only if $u' \in \ACl(\Imi \cup \Ipl)$ and
\be \label{inhom-sl}
(pu')' = \frac{p}{\varepsilon f}F
\ee
almost everywhere in $\Ito$ where the non-vanishing function $p$ solves \eqref{intfact}.
\end{lem}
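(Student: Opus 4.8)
The plan is to reduce the whole equivalence to a single pointwise integrating-factor identity, after first upgrading the regularity of $u$. Every claim is local, so I would fix a compact interval $[a,b]$ contained in $\Ipl$ (the case $[a,b]\subset\Imi$ being identical) and argue there, recovering the statement on $\Imi\cup\Ipl$ by exhausting those intervals with such $[a,b]$; since $\Ito\setminus(\Imi\cup\Ipl)=\{0\}$ is null, ``almost everywhere in $\Ito$'' and ``almost everywhere in $\Imi\cup\Ipl$'' coincide. The reason for working locally is that $f$ vanishes at the endpoints $0,\pm\pi$, so one cannot divide by $f$ globally; but on $[a,b]$ the positivity hypothesis \eqref{c4} forces $f$ to be bounded below by a positive constant, which is exactly what is needed to invert $f$ and $p$.

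First I would settle the regularity transfer, which holds for any admissible $u$ irrespective of the equation. Admissibility gives $u\in\ACl(\Imi\cup\Ipl)$, whence $fu'=(fu'+u)-u\in\ACl(\Imi\cup\Ipl)$. Since $f$ is absolutely continuous by \eqref{c1} and bounded away from $0$ on $[a,b]$, the map $t\mapsto 1/t$ is Lipschitz on the range of $f$ restricted to $[a,b]$, so $1/f$ is absolutely continuous there; hence $u'=(1/f)(fu')$, a product of absolutely continuous functions, is absolutely continuous on $[a,b]$. Letting $[a,b]$ vary gives $u'\in\ACl(\Imi\cup\Ipl)$, and then also $pu'=(p/f)(fu')\in\ACl(\Imi\cup\Ipl)$, because $p$ is absolutely continuous and non-vanishing. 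In particular the left-hand side of \eqref{inhom-sl} is well defined, and the requirement $u'\in\ACl(\Imi\cup\Ipl)$ stated in the lemma is in fact automatic for admissible $u$.

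With $u'\in\ACl$ in hand, $u''$ exists almost everywhere, so I may expand $\elle[u]=\varepsilon f u''+(\varepsilon f'+1)u'$ and compute, using that \eqref{intfact} reads $p'/p=(\varepsilon f'+1)/(\varepsilon f)$,
\be
\frac{p}{\varepsilon f}\,\elle[u] = p u'' + \frac{p(\varepsilon f'+1)}{\varepsilon f}\,u' = p u'' + p' u' = (pu')'
\ee
almost everywhere on $[a,b]$, the product rule being legitimate for the absolutely continuous pair $p,u'$. Since $p/(\varepsilon f)$ is non-vanishing and locally bounded away from $0$, multiplying or dividing by it preserves almost-everywhere identities, so $\elle[u]=F$ holds a.e. if and only if $(pu')'=pF/(\varepsilon f)$ holds a.e. This yields both implications at once: in the forward direction admissibility already delivered $u'\in\ACl$ and the identity converts $\elle[u]=F$ into \eqref{inhom-sl}, while in the backward direction $u'\in\ACl$ is assumed and the identity runs the other way. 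The only genuine obstacle is the regularity transfer of the second paragraph, whose resolution hinges entirely on the non-degeneracy of $f$ inside $\Imi$ and $\Ipl$ guaranteed by \eqref{c4}; the remaining computation is a routine first-order integrating-factor manipulation.
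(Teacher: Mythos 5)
Your proof is correct and follows essentially the same route as the paper: deduce $u'\in\ACl(\Imi\cup\Ipl)$ from admissibility and the non-vanishing of $f$ inside each half-interval, then run the integrating-factor identity $(pu')'=p'u'+pu''=\frac{p}{\varepsilon f}\elle[u]$ in both directions. Your extra observations (that $u'\in\ACl$ is automatic for any admissible $u$, and the explicit justification that $1/f$ is locally absolutely continuous) only make explicit what the paper leaves implicit.
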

\begin{proof}
Without lost of generality we consider $\Ipl$ only, the same arguments applying to $\Imi$.

For the forward implication we suppose $u$ is admissible and $\elle[u] = F$. Since $u, fu' + u \in \ACl \Ipl$ and $f$ is non-vanishing on $\Ipl$, we see that $u' \in \ACl \Ipl$. Now
\besu
\frac{(pu')'}{p} & = \frac{p'u' + pu''}{p} = \left(\frac{f'}{f} + \frac{1}{\varepsilon f}\right)u' + u'' \\
& = \frac{1}{\varepsilon f}(\varepsilon f'u' + \varepsilon f'u'' + u'') = \frac{1}{\varepsilon f}(\varepsilon fu' + u)' = \frac{F}{\varepsilon f},
\eesu
which is \eqref{inhom-sl}.

For the reverse implication suppose $u' \in \ACl \Ipl$. Since $f \in \ACl \Ipl$ is positive we have that $fu',fu'+u \in \ACl \Ipl$. Knowing \eqref{inhom-sl} and rearranging the above calculation yields
\beu
\frac{1}{\varepsilon f}(\varepsilon fu' + u)' = \frac{(pu')'}{p} = \frac{p}{\varepsilon f}F.
\eeu
Therefore $\elle[u] = F$ almost everywhere.
\end{proof}

Formulation \eqref{inhom-sl} will prove useful in deriving the Green's function of $\Lper$.

\begin{lem}
The integrating factor $p$ in \eqref{intfact-form} satisfies
\be \label{asym-2}
p(x) \sim \left\{ \begin{array}{ll}
f(x)|x|^{1/(\varepsilon c)}, & \mbox{when} \quad x \to 0^\pm; \\
 f(x)|\pi \mp x|^{-1/(\varepsilon c)}, & \mbox{when} \quad x \to \pm\pi.
\end{array} \right.
\ee
\end{lem}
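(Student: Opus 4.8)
The plan is to take logarithms in \eqref{intfact-form} and analyse the two pieces of the exponent separately, using the explicit local behaviour of $f$ granted by the standing assumptions. Working on $\Ipl$ (the interval $\Imi$ being handled identically, or via $f(-x)=-f(x)$), I would write, up to an additive constant that I absorb into $c_2^+$,
\beu
\log p(x) = \int_{\pi/2}^x \frac{f'(y)}{f(y)}\,dy + \frac1\varepsilon\int_{\pi/2}^x\frac{dy}{f(y)}.
\eeu
The first integral is elementary: since $f>0$ on $\Ipl$ it equals $\log f(x)-\log f(\pi/2)$, which after exponentiation produces exactly the factor $f(x)$ appearing in \eqref{asym-2}. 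Since $f$ is continuous and strictly positive on compact subsets of $\Ipl$, the second integral $\varepsilon^{-1}\int f^{-1}$ has no interior singularity, so the whole proof reduces to extracting its leading logarithmic divergence as $x\to0^+$ and as $x\to\pi^-$.

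For the endpoint $x\to0^+$ I would invoke condition~\ref{c3}, namely $f(y)=cy+O(y^{1+\delta})$ with $c>0$, to split
\beu
\frac{1}{f(y)} = \frac{1}{cy} + \left(\frac1{f(y)}-\frac1{cy}\right),
\eeu
and to observe that the bracketed remainder equals $(cy-f(y))/(cy\,f(y))=O(y^{\delta-1})$, hence is \emph{integrable} near $0$ precisely because $\delta>0$. Consequently $\int_{\pi/2}^x f^{-1} = c^{-1}\log|x| + C_0 + o(1)$ for a finite constant $C_0$, so that $\exp\bigl(\varepsilon^{-1}\int_{\pi/2}^x f^{-1}\bigr)$ is asymptotic to a nonzero multiple of $|x|^{1/(\varepsilon c)}$. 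Multiplying by the factor $f(x)$ produced by the first integral yields $p(x)\sim f(x)|x|^{1/(\varepsilon c)}$, as claimed.

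For the endpoint $x\to\pi^-$ I would first record the local expansion of $f$ there. Combining $f(x+\pi)=-f(x)$ with $f(-x)=-f(x)$ gives $f(\pi-s)=f(s)$, so setting $s=\pi-x$ one obtains $f(x)=c(\pi-x)+O\bigl((\pi-x)^{1+\delta}\bigr)$ near $x=\pi$. The same splitting as above then gives $\int_{\pi/2}^x f^{-1} = -c^{-1}\log|\pi-x| + C_\pi + o(1)$, the sign reversal arising because the leading term is now $1/\bigl(c(\pi-y)\bigr)$, whose antiderivative is $-c^{-1}\log(\pi-y)$. Exponentiating and multiplying by $f(x)$ gives $p(x)\sim f(x)|\pi-x|^{-1/(\varepsilon c)}$, and the cases $x\to0^-$, $x\to-\pi$ follow identically on $\Imi$.

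The only genuinely delicate point is the control of the remainder: everything hinges on the Hölder-type refinement $f(x)=cx+O(x^{1+\delta})$ in condition~\ref{c3}, since mere differentiability (the borderline $\delta=0$) would leave a potentially non-integrable $O(y^{-1})$ error term and destroy the clean power-law asymptotics. Beyond this, the argument is a routine computation of a logarithmically divergent integral followed by exponentiation.
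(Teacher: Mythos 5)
Your proposal is correct and follows essentially the same route as the paper: both isolate the factor $f(x)$ from $\int f'/f$, split $1/(\varepsilon f(y))$ into $1/(\varepsilon c y)$ plus a remainder that condition~\ref{c3} makes integrable near $0$ (the paper calls this remainder $\eta$), and handle the endpoints $\pm\pi$ by the analogous expansion $f(y)=-c(y-\pi)+O((y-\pi)^{1+\delta})$ deduced from the symmetries of $f$. Your explicit derivation of $f(\pi-s)=f(s)$ and your remark on the role of $\delta>0$ merely spell out details the paper leaves implicit.
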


\begin{proof}
We compute
\beu
\frac{1}{\varepsilon f(y)} - \frac{1}{\varepsilon yf'(0)} = \frac{O(y^{1+\delta})}{\varepsilon yf'(0)(f'(0)y + O(y^{1+\delta}))} =: \eta(y),
\eeu
which is a Lebesgue integrable function in a neighbourhood of $y=0$. 
Consequently
\besu
\int_{\pi/2}^x \frac{1}{\varepsilon f(y)} dy & = \int_{\pi/2}^x \left(\frac{1}{\varepsilon yf'(0)} + \eta(y)\right) dy \\
& = \ln{x^{1/(\varepsilon f'(0))}} - \ln\left(\frac{\pi}{2}\right)^{1/(\varepsilon f'(0))} + \int_{\pi/2}^x \eta(y) dy.
\eesu
Thus, by \eqref{intfact-form},
\beu
p(x) = c_3^+ e^{\int_{\pi/2}^x \eta(y) dy} f(x)x^{1/(\varepsilon f'(0))}.
\eeu
This proves the result for $x \to 0^+$. The case $x \to 0^-$ is similar.

For the cases $x \to \pm\pi$ the argument is again similar, but the use of $f(y) = f'(0)y + O(y^{1+\delta})$ is replaced with $f(y) = -f'(0)(y-\pi) + O((y-\pi)^{1+\delta})$, which follows from the assumptions on $f$. 
\end{proof}

We set $w = p/f$. Then
\be \label{asym-1}
w(x) \sim \left\{ \begin{array}{ll}
 |x|^{1/(\varepsilon c)}, & \mbox{when} \quad x \to 0^\pm; \\
|\pi \pm x|^{-1/(\varepsilon c)}, & \mbox{when} \quad x \to \pm\pi.
\end{array} \right.
\ee
We point out two ``admissible'' solutions to the homogeneous problem
\be \label{hom}
\elle[u](x) = 0 \quad \mbox{for almost all} \quad x \in \Ito.
\ee
These are $\phi \equiv 1 \in L^2\Ito$ and $\psi = 1/w \not\in L^2\Ito$.
Note that the latter is only ensured by the choice $\varepsilon \leq 2/c$.
To see that $\psi$ is a solution, observe that our assumptions on $f$ yield $\psi' \in \ACl(\Imi\cup \Ipl)$ and
\be \label{calc}
p\left(\frac{f}{p}\right)' = \frac{f'p - fp'}{p} = f' - \frac{p'}{p}f
=  f' - \left(\frac{f'}{f} + \frac{1}{\varepsilon f}\right)f = \frac{1}{\varepsilon}.
\ee
Consequently, by Lemma \ref{sol-equiv}, $\psi$ satisfies \eqref{hom}.
If $f(x)=\sin(x)$, then $\psi(x)=|\cot(x/2)|^{1/\varepsilon}$.

We say that $u \colon J \to \Cbb$, is an \emph{admissible solution} of $\elle[u] = F$ if 
\begin{enumerate}
\item $u \in \ACl(J)$,
\item $fu' + u \in \ACl(J)$ and
\item $\elle[u](x) = F(x)$ for almost all $x \in J$.
\end{enumerate}

\begin{lem} Let $F \in L^1_{\rm loc}(\Imi \cup \Ipl)$. A function $u$ is an admissible solution to $\elle[u] = F$ in $\Imi \cup \Ipl$ if and only if, for some constants $k_1^\pm$ and $k_2^\pm$,
\be \label{rep}
u(x) = -\psi(x)\left(\int_{\pm\pi/2}^{x} \frac{F(y)}{\psi(y)} dy + k_2^\pm\right) + \left(\int_{\pm\pi/2}^{x} F(y) dy + k_1^\pm\right)
\ee
for almost every $x \in \Ipl$ (plus sign) or $x \in \Imi$ (minus sign).
\end{lem}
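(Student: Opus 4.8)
My plan is to lean on Lemma~\ref{sol-equiv}, which converts the differential expression into the divergence form \eqref{inhom-sl}, and then to integrate the resulting second-order equation twice. By Lemma~\ref{sol-equiv}, a function $u$ is an admissible solution of $\elle[u]=F$ on $\Imi\cup\Ipl$ precisely when $u'\in\ACl(\Imi\cup\Ipl)$ and $(pu')'=\frac{p}{\varepsilon f}F$ almost everywhere. Since $\psi=1/w=f/p$, the right-hand side is exactly $\frac{1}{\varepsilon\psi}F$, and this lies in $L^1_\loc(\Imi\cup\Ipl)$ because $\psi$ is continuous, positive and bounded away from $0$ on every compact subinterval (as recorded in \eqref{asym-1}). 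Working separately on $\Ipl$ and $\Imi$, exactly as in the proof of Lemma~\ref{sol-equiv}, I would then treat the equation as a classical linear ODE whose homogeneous solutions are the pair $\phi\equiv 1$ and $\psi$ already exhibited.

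The first step is to integrate $(pu')'=\frac{1}{\varepsilon\psi}F$ once; since $pu'$ is locally absolutely continuous, this yields $p(x)u'(x)=\frac1\varepsilon\int_{\pm\pi/2}^x\frac{F(y)}{\psi(y)}\,dy+C^\pm$ for constants $C^\pm$. The crucial algebraic input is the computation \eqref{calc}, which gives $p\psi'=p(f/p)'=-1/\varepsilon$, equivalently $1/p=-\varepsilon\psi'$; the sign here is exactly what makes the final signs in \eqref{rep} come out, and it is confirmed a posteriori by substitution. Writing $G(x)=\int_{\pm\pi/2}^x F/\psi$, this substitution turns the above into $u'=-\psi'\big(G+\varepsilon C^\pm\big)$. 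The second integration is then carried out by parts, using $\int\psi'G=\psi G-\int\psi G'=\psi G-\int F$, so that the single quadrature $G$ reassembles into the two separate quadratures $\int F/\psi$ and $\int F$ appearing in \eqref{rep}. Collecting terms gives precisely \eqref{rep}, with $k_2^\pm=\varepsilon C^\pm$ and $k_1^\pm$ the constant produced by the last integration. Because $\psi$, $G$ and $x\mapsto\int_{\pm\pi/2}^x F$ are all locally absolutely continuous on $\Imi\cup\Ipl$ and $\psi$ is non-vanishing there, both $u$ and $u'=-\psi'(G+k_2^\pm)$ lie in $\ACl(\Imi\cup\Ipl)$, and $fu'+u\in\ACl(\Imi\cup\Ipl)$ follows from $f\in\ACl$; hence the representation genuinely produces an admissible solution.

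For the converse I would simply run these steps backwards, which is the cleanest way to obtain the ``if'' direction: starting from \eqref{rep}, I differentiate to get $u'=-\psi'(G+k_2^\pm)$, the two contributions $-\psi G'=-F$ and $(\int F)'=F$ cancelling; I then note $u'\in\ACl(\Imi\cup\Ipl)$ and compute $pu'=\frac1\varepsilon(G+k_2^\pm)$ using $p\psi'=-1/\varepsilon$, whence $(pu')'=\frac1\varepsilon G'=\frac{p}{\varepsilon f}F$, and Lemma~\ref{sol-equiv} returns $\elle[u]=F$. The one point that really needs care is this sign bookkeeping in \eqref{calc} together with the exact cancellation it drives in the integration by parts, since it is the relation $p\psi'=-1/\varepsilon$ that forces the clean splitting into $\int F/\psi$ and $\int F$. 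I expect that step, along with verifying the $L^1_\loc$ and $\ACl$ regularity uniformly on compact subintervals where $\psi$ stays bounded and bounded below, to be the only genuine obstacle; everything else is routine once the fundamental system $\{1,\psi\}$ and the divergence form \eqref{inhom-sl} are in hand.
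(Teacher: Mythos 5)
Your argument is correct, and it reaches \eqref{rep} by a genuinely different route in the forward direction. The paper first checks the ``if'' direction by differentiating \eqref{rep} and computing $\elle[u]$ directly, and then obtains the ``only if'' direction by subtracting from an arbitrary admissible solution the particular solution \eqref{rep} with $k_1^\pm=k_2^\pm=0$ and invoking the description \eqref{hom-sol} of the homogeneous solutions as $\alpha\psi+\beta$; you instead derive the representation constructively, passing to the divergence form \eqref{inhom-sl} via Lemma~\ref{sol-equiv}, integrating once, eliminating $1/p$ through $p\psi'=-1/\varepsilon$, and integrating by parts to split the double quadrature into $\int F/\psi$ and $\int F$. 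Your route avoids a separate uniqueness step for the homogeneous problem at the price of justifying two applications of the fundamental theorem of calculus and one integration by parts for locally absolutely continuous functions, which you correctly flag and which do hold on compact subintervals of $\Imi\cup\Ipl$ where $p$ and $\psi$ are bounded and bounded away from zero. Two remarks. First, you are right that the identity needed is $p(f/p)'=-1/\varepsilon$: the final equality in \eqref{calc} as printed reads $+1/\varepsilon$, but $f'-\bigl(\tfrac{f'}{f}+\tfrac{1}{\varepsilon f}\bigr)f=-\tfrac1\varepsilon$, so the paper has a harmless sign slip there (only the constancy of $p\psi'$ is used at that point, and a compensating slip occurs in its verification that $\varepsilon f\psi'F/\psi$ contributes $F$); your sign is the one that makes the signs in \eqref{rep} come out. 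Second, when you close the ``if'' direction through the reverse implication of Lemma~\ref{sol-equiv}, that lemma is phrased for admissible $u$, so admissibility must be recovered from $u'\in\ACl(\Imi\cup\Ipl)$ and $f\in\ACl$ exactly as in the paper's proof of that lemma --- you do check this, so there is no gap.
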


\begin{proof}
First we assume $u$ is given by \eqref{rep}. Then clearly $u \in \ACl(\Imi \cup \Ipl)$ and
\besu
u'(x) & = -\psi'(x)\left(\int_{\pm\pi/2}^x \frac{F(y)}{\psi(y)} dy + k_2^\pm\right) - \psi(x)\frac{F(x)}{\psi(x)} + F(x) \\
& = -\psi'(x)\left(\int_{\pm\pi/2}^x \frac{F(y)}{\psi(y)} dy + k_2^\pm\right).
\eesu
In addition, $u' \in \ACl(\Imi \cup \Ipl)$, so $fu' + u \in \ACl(\Imi \cup \Ipl)$, and, using \eqref{intfact},
\besu
\elle[u](x) & = (\varepsilon fu')'(x) + u'(x) \\
& = -\elle[\psi](x)\left(\int_{\pm\pi/2}^x \frac{F(y)}{\psi(y)} dy + k_2^\pm\right) + \varepsilon f(x) \psi'(x) \frac{F(x)}{\psi(x)} \\
& = \varepsilon f(x) \psi'(x) \frac{F(x)}{\psi(x)} = \frac{\varepsilon f(x)(p(x)f'(x) - p'(x)f(x))}{f(x)p(x)} \\
& = \varepsilon f(x)F(x)\left(\frac{f'(x)}{f(x)} - \frac{p'(x)}{p(x)}\right) = F(x),
\eesu
which proves the reverse implication of the lemma.

To prove the forward implication we first observe, by Lemma \ref{sol-equiv}, that an arbitrary admissible function solving \eqref{hom} also satisfies $(p w')' = 0$. Therefore, $p w'$ is constant so $w' = 1/p$. Integrating both sides and using \eqref{calc} gives us that
\be \label{hom-sol}
w = \alpha\psi + \beta
\ee
for some constants $\alpha,\beta \in \Cbb$.

Now suppose $\tilde{u}$ is an admissible solution, so $\elle[\tilde{u}] = F$ almost everywhere. Here we will consider the solution on $\Ipl$, the same argument giving the result in $\Imi$. Let $u$ be given by \eqref{rep} with $k_1^+ = k_2^+ = 0$. Then $\tilde{u} - u$ solves the homogeneous problem \eqref{hom} and so, by \eqref{hom-sol}, $\tilde{u} - u = \alpha \psi + \beta$ for some $\alpha,\beta \in \Cbb$. A rearrangement of this expression yields
\beu
u(x) = -\psi(x)\left(\int_{\pi/2}^x \frac{F(y)}{\psi(y)} dy + \alpha\right) + \left(\int_{\pi/2}^x F(y) dy - \beta\right),
\eeu
which is of the required form.
\end{proof}

Given $F \in L^2\Ito$, we wish to be able to solve \eqref{inhom} with $u \in \AC\Ito \cap L^2\Ito$ satisfying periodic boundary conditions at $\pm\pi$.

If we wish $u \in L^2\Ito$ in \eqref{rep}, then necessarily we require
\beu
k_2^\pm = -\int_{\pm\pi/2}^0 \frac{F(y)}{\psi(y)} dy.
\eeu
Indeed, note that as $\psi(x) \sim x^{-\frac{1}{c\varepsilon}}$ for $x \sim 0$ (from \eqref{asym-1}) we require that
\beu
\lim_{x \to 0} \left(\int_{\pm\pi/2}^x \frac{F(y)}{\psi(y)} dy + k_2^\pm\right) = 0.
\eeu
Therefore \eqref{rep} becomes
\be \label{rep-2}
u(x) = -\psi(x)\int_0^x \frac{F(y)}{\psi(y)} dy + \int_0^x F(y) dy + k^\pm,
\ee
for any $k^\pm \in \Cbb$ and $x \in \Imi\cup\Ipl$. Using the fact that $F \in L^2\Ito$ and \eqref{asym-1},
\besu
\left|\psi(x)\int_0^x \frac{F(y)}{\psi(y)} dy\right| & \leq |\psi(x)| \left(\int_0^\pi |F(y)|^2 dy \right)^{1/2} \left(\int_0^x |\psi(y)|^{-2} dy \right)^{1/2} \\
& \leq c_5|\psi(x)||x|^{\frac{1}{c\varepsilon} + \frac12} = O(x^{1/2})
\eesu
as $x \to 0$, and
\besu
\left|\psi(x)\int_0^x \frac{F(y)}{\psi(y)} dy\right| & \leq \left|\psi(x)\int_0^{\pm\pi/2} \frac{F(y)}{\psi(y)} dy\right| + \left|\psi(x)\int_{\pm\pi/2}^x \frac{F(y)}{\psi(y)} dy\right| \\
& \leq |\psi(x)| \left(c_6 + \left(\int_{\pm\pi/2}^{\pm\pi} |F(y)|^2 dy \right)^{1/2} \left(\int_{\pm\pi/2}^x |\psi(y)|^{-2} dy \right)^{1/2} \right) \\
& \leq c_7|\psi(x)||x\mp \pi|^{\frac12 - \frac{1}{c\varepsilon}} = O((x\mp\pi)^{1/2})
\eesu
as $x \to \pm\pi$, so $u$ of \eqref{rep-2} does indeed belong to $L^2\Ito$.

The requirement that $u \in \AC\Ito$ means that it must, in particular, be continuous. Therefore
\beu
k^+ = \lim_{x \searrow 0} u(x) = \lim_{x \nearrow 0} u(x) = k^-
\eeu
and so \eqref{rep} is further reduced to
\be \label{rep-3}
u(x) = \int_0^x \left(1 - \frac{\psi(x)}{\psi(y)}\right) F(y) dy + k,
\ee
for any $k \in \Cbb$ and $x \in I$.

Finally, the periodic boundary condition requires that both limits
\be
\lim_{x \to \pm\pi} u(x) = k + \int_0^{\pm\pi} F(y) dy
\ee
are equal. This is equivalent to
\beu
\int_{-\pi}^{+\pi} F(y) dy = 0
\eeu
and so the periodicity of $u$ is equivalent to the requirement that $F \perp 1$.

Therefore $u \in L^2\Ito$ is an admissible solution to $\elle[u] = F$ for some $F \in L^2\Ito$ and satisfies the periodic boundary condition, if and only if it has the form \eqref{rep-3} for some $k \in \Cbb$ and $F \in L^2\Ito \ominus \spa\{1\}$. We now describe the operator theoretical setting for the differential expression
$\elle$. Let
\beu
\Dmax = \{u \colon \Ito \to \Cbb \, | \, \mbox{\eqref{rep-3} holds for some $k \in \Cbb$ and $F \in L^2\Ito$} \}.
\eeu
By the above argument we know that
\beu
\Dmax = \{u \in L^2\Ito \, | \, u\mbox{ is an admissible function and $\elle[u] \in L^2\Ito$}\},
\eeu
that is, it is the maximal domain associated with the differential operator $\elle$. We define
\besu
\DomL & = \{u \in \Dmax \, | \, \ell_\varepsilon[u] \perp 1 \} \\
& = \{u \in \Dmax \, | \, \lim_{x \to \pi} u(x) = \lim_{x \to -\pi} u(x) \}
\eesu
and denote by $\Lper$ the differential operator $(\elle,\DomL)$.
\begin{lem}\label{closed}
The operator $\Lper:\DomL\longrightarrow L^2\Ito$ is closed.
\end{lem}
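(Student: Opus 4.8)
The plan is to read off closedness directly from the explicit representation \eqref{rep-3}. I introduce the linear operator $K\colon L^2\Ito \to L^2\Ito$ defined by
\[
(KF)(x) = \int_0^x \left( 1 - \frac{\psi(x)}{\psi(y)} \right) F(y)\, dy .
\]
By the analysis leading to \eqref{rep-3}, a function $u$ lies in $\DomL$ precisely when $u = KF + k$ for some constant $k \in \Cbb$ and some $F \in L^2\Ito$ with $F \perp 1$, and in that case $\elle[u] = F$ (indeed $k = \lim_{x\to 0}u(x)$ and $F = \elle[u]$, so the decomposition is unique). Hence closedness will follow once I establish two facts: first, that $K$ is a bounded operator on $L^2\Ito$; and second, that if $u_n = KF_n + k_n \to u$ and $F_n \to g$ in $L^2\Ito$, then $u$ again has this form with datum $g$ and $g \perp 1$.

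For the first fact I would simply repackage the estimates already carried out in the derivation of \eqref{rep-3}. The bounds obtained there for $\psi(x)\int_0^x F(y)/\psi(y)\,dy$ near $x=0$ and near $x=\pm\pi$ are all of the shape $C\|F\|_{L^2\Ito}$ times a factor $O(|x|^{1/2})$ or $O(|x\mp\pi|^{1/2})$, while on any compact subinterval of $\Imi\cup\Ipl$ the function $\psi$ is continuous, positive and bounded away from zero, so Cauchy--Schwarz furnishes the same type of control there. Combined with $\bigl|\int_0^x F\bigr| \le \|F\|_{L^2\Ito}\,|x|^{1/2}$, these estimates give $\|KF\|_{L^\infty\Ito} \le C\|F\|_{L^2\Ito}$, so $K$ is bounded from $L^2\Ito$ into $L^\infty\Ito$, and a fortiori into $L^2\Ito$.

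For the second fact, let $u_n \in \DomL$ with $u_n \to u$ and $\elle[u_n] \to g$ in $L^2\Ito$, and write $F_n = \elle[u_n]$, so that $F_n \perp 1$ and $u_n = KF_n + k_n$. Since orthogonality to $1$ is preserved under $L^2$-limits, $g \perp 1$. By the boundedness of $K$ we have $KF_n \to Kg$ in $L^2\Ito$, whence $k_n = u_n - KF_n$ converges in $L^2\Ito$; as the constant functions form a closed subspace of $L^2\Ito$, this limit is itself a constant $k$ and $u = Kg + k$ with $g \perp 1$. Therefore $u \in \DomL$ and $\elle[u] = g$, which is exactly the closedness of $\Lper$. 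I expect the only genuine work to lie in the boundedness of $K$, and even that is essentially a bookkeeping exercise over the pointwise estimates already in hand; the main point to be careful about is to confirm that the implicit constants in those estimates depend on $F$ only through $\|F\|_{L^2\Ito}$ — which they do, via Cauchy--Schwarz — so that they yield operator-norm control rather than mere pointwise finiteness. The preservation of $F \perp 1$ and the convergence of the constants $k_n$ are then routine.
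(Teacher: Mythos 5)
Your proposal is correct and follows essentially the same route as the paper: both read off $u = TF + k$ from \eqref{rep-3}, use boundedness of the integral operator $T$ (the paper notes its kernel $G$ is bounded via \eqref{asym-1}, you repackage the pointwise estimates into an $L^2\to L^\infty$ bound — either works), deduce convergence of the constants $k_n$, and observe that $F\perp 1$ passes to the limit. No substantive difference.
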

\begin{proof}
We take a sequence $\{u_n\}_{n=1}^\infty \subset \DomL$ such that $u_n \to u$ in $L^2\Ito$ and $\elle[u_n] = F_n$ ($n \in \Nbb$) are such that $F_n \to F \in L^2$ with $F_n \in L^2\Ito \ominus \spa\{1\}$ for all $n \in \Nbb$. We need to prove $u \in \DomL$ and $\elle[u] = F$. From \eqref{rep-3} we know that
\beu
u_n = T(F_n) + k_n,
\eeu
where $T(F)(x) = \int_0^x \left(1 - \psi(x)/\psi(y)\right) F(y) dy$. The operator $T$ is bounded on $L^2\Ito$ as its kernel is bounded (as can be seen via \eqref{asym-1}). Consequently, as $F_n \to F$, we have that $k_n \to k$ for some $k\in \Cbb$. Thus $u = T(F) + k$ and so $u \in \Dmax$ and $\elle[u] = F$. Moreover, since $F_n \perp 1$, $F \perp 1$ and so $u \in \DomL$.
\end{proof}


\section{Trace properties of the resolvent of $\Lper$}

For $F\in L^2\Ito$, let
\beu
TF(x)=T[F](x) = \int_0^x \left(1 - \frac{\psi(x)}{\psi(y)}\right) F(y) dy = \int_{-\pi}^\pi G(x,y) F(y) dy
\eeu
where
\be \label{green_ker}
G(x,y) = \left\{\begin{array}{ll}
\left(1 - \psi(x)/\psi(y)\right), & \mbox{if $0 < y < x$;} \\
-\left(1 - \psi(x)/\psi(y)\right), & \mbox{if $x < y < 0$;} \\
0, & \mbox{otherwise.}
\end{array}\right.
\ee
See Figure~\ref{fig1}. By virtue of \eqref{asym-1}, $G$ is bounded and so $T$ is a Hilbert-Schmidt operator.

\begin{figure}[hth]
\centerline{\includegraphics[height=9cm, angle=0]{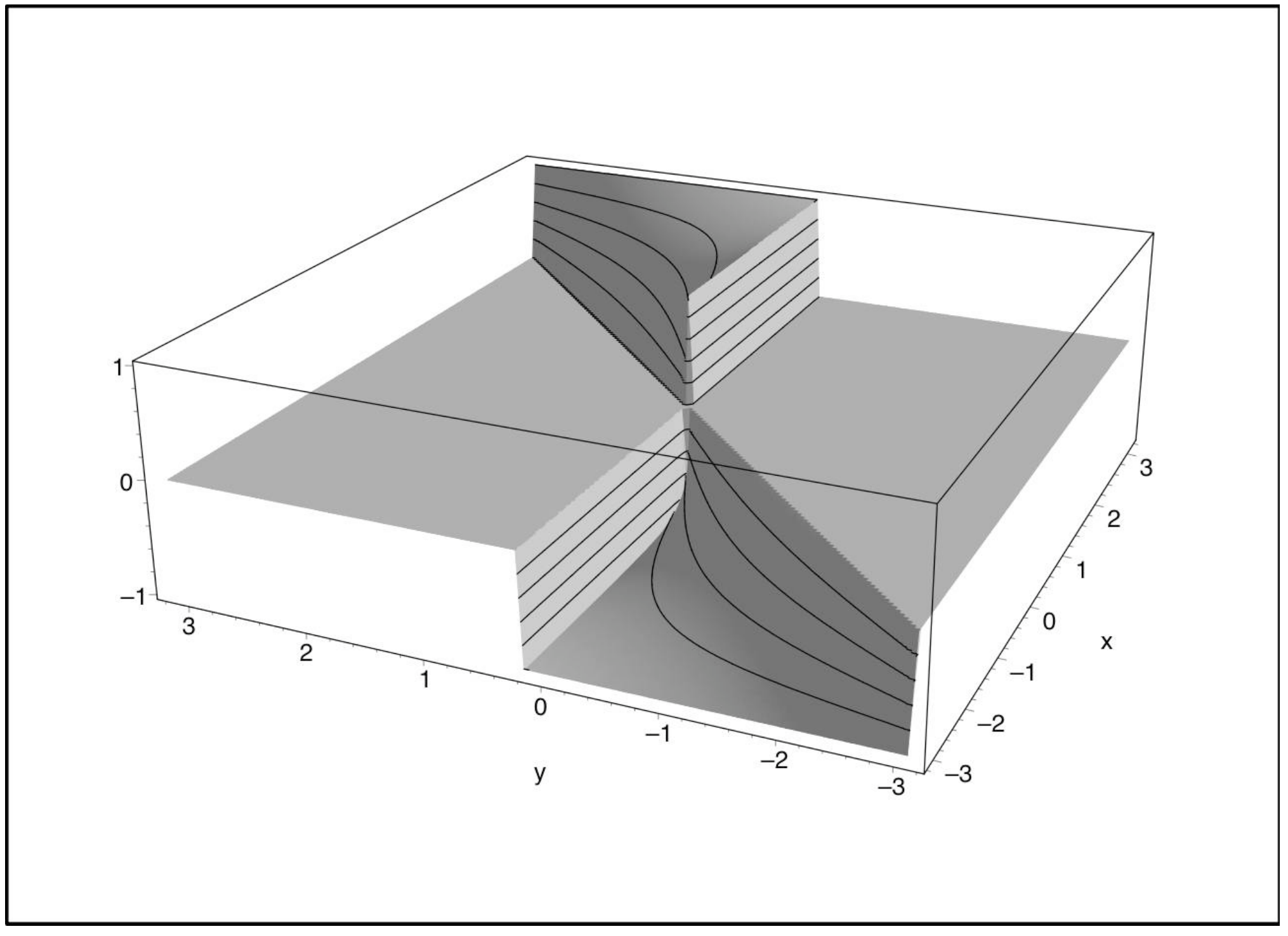}}   
\caption{Green's function $G(x,y)$ for $f(x)=\sin(x)$ and $\varepsilon=1$. \label{fig1}}
\end{figure}

\begin{lem} \label{five}
Let $u \in \DomL$. Then $u \perp 1$ and $\Lper u = F$, if and only if
\beu
u(x) = TF(x) - \frac{1}{2\pi}\langle TF,1\rangle = (I - \frac{1}{2\pi}|1\rangle \langle 1|)TF(x) \qquad \text{and}\qquad F\perp 1.
\eeu
\end{lem}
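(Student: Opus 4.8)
The plan is to reduce everything to the representation \eqref{rep-3}, which already asserts that every $u \in \Dmax$ coincides with $TF + k$ for $F = \elle[u]$ and a single constant $k \in \Cbb$. The only genuinely new content of the statement is the identification of this constant under the extra requirement $u \perp 1$. Before starting I would record the elementary fact that, since $\Ito$ has length $2\pi$, one has $\langle 1,1\rangle = 2\pi$; hence $P := \frac{1}{2\pi}|1\rangle\langle 1|$, which acts by $Pg = \frac{1}{2\pi}\langle g,1\rangle\, 1$, is the orthogonal projection of $L^2\Ito$ onto $\spa\{1\}$, and $I - P$ is the orthogonal projection onto its complement. In particular the two expressions in the statement agree, since $(I-P)TF = TF - \frac{1}{2\pi}\langle TF,1\rangle$.

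For the implication from left to right I would assume $u \in \DomL$ with $u \perp 1$ and $\Lper u = F$. By the definition of $\DomL$ the condition $\elle[u] \perp 1$ holds, and since $F = \Lper u = \elle[u]$ this already gives $F \perp 1$. As $u \in \Dmax$, the representation \eqref{rep-3} applies and yields $u = TF + k$ for some $k \in \Cbb$. Pairing with the constant function and using $\langle 1,1\rangle = 2\pi$ gives $0 = \langle u,1\rangle = \langle TF,1\rangle + 2\pi k$, so $k = -\frac{1}{2\pi}\langle TF,1\rangle$ is forced; that is, $u = (I-P)TF$, as required.

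For the converse I would take $u \in \DomL$ satisfying $u = (I-P)TF$ with $F \perp 1$, and read this as $u = TF + k$ with $k = -\frac{1}{2\pi}\langle TF,1\rangle \in \Cbb$. This is exactly the shape \eqref{rep-3}, so $\elle[u] = F$, i.e. $\Lper u = F$; here the added constant is harmless because $\phi \equiv 1$ solves the homogeneous equation \eqref{hom}. Finally $u \perp 1$ is immediate from $u \in \operatorname{ran}(I-P)$, or directly from $\langle u,1\rangle = \langle TF,1\rangle - \frac{1}{2\pi}\langle TF,1\rangle\langle 1,1\rangle = 0$.

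I do not expect a real obstacle here: the lemma is essentially bookkeeping built on the uniqueness of $k$ in \eqref{rep-3} together with the fact that membership in $\DomL$ encodes precisely the orthogonality $\elle[u] \perp 1$. The one point deserving care is the normalisation of the rank-one operator, i.e. keeping track of the factor $2\pi = \|1\|^2$ that separates the orthogonal projection $P$ from the bare dyad $|1\rangle\langle 1|$.
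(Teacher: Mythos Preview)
Your proof is correct and follows essentially the same route as the paper: both directions reduce to the representation \eqref{rep-3}, and the forward implication pins down $k$ by pairing $u=TF+k$ with the constant function to obtain $2\pi k = -\langle TF,1\rangle$. The paper's proof is terser (it dismisses the converse as ``trivial''), whereas you spell out why $u=(I-P)TF$ with $F\perp 1$ forces $\elle[u]=F$ and $u\perp 1$; but there is no substantive difference in strategy.
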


\begin{proof}
Suppose that $u \perp 1$ and $\Lper u = F$. Therefore, by \eqref{rep-3},
\beu
0 = \int_{-\pi}^\pi u(x) dx = \int_{-\pi}^\pi TF(x) + k dx = \langle TF,1\rangle + 2\pi k
\eeu
and so $2\pi k = -\langle TF,1\rangle$.
This proves one direction of the implication. The other direction is trivial.
\end{proof}

Let $\tilde{D} = \DomL \ominus \spa\{1\}$. Then $\Lper$ has the block diagonal
representation
\beu
\Lper = \Lper \upharpoonright_{\tilde{D}} \oplus 0 \colon \tilde{D} \oplus \spa\{1\} \to \spa\{1\}^\perp \oplus \spa\{1\}.
\eeu
Let
\beu
\tilde{T}F(x) = TF(x) - \frac{1}{2\pi}\langle TF,1\rangle
\eeu
for $F \in L^2\Ito \ominus \spa\{1\}$ so that, by Lemma \ref{five}, $\tilde{T} \colon \spa\{1\}^\perp \to \tilde{D}$. Then $(\Lper \upharpoonright_{\tilde{D}})^{-1} = \tilde{T}$ and, in particular, $0 \not\in \spec(\Lper \upharpoonright_{\tilde{D}})$. Since $\tilde{T}$ is a rank-$1$ perturbation of $T\upharpoonright_{\spa\{1\}^\perp}$ and the generalised singular value decomposition preserves any block structure of operators, we know the following.
\begin{lem}
The resolvent of $\Lper$ is in the $p$-Schatten class $\sch_p$ if and only if $T \in \sch_p$.
\end{lem}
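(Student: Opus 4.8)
The plan is to obtain the equivalence through a short chain of reductions, at each step passing to an operator whose $p$-Schatten membership is manifestly the same. First I would record that the phrase ``the resolvent of $\Lper$ lies in $\sch_p$'' is unambiguous: if $(\Lper-\lambda_0)^{-1}\in\sch_p$ for a single $\lambda_0\in\rho(\Lper)$, then the first resolvent identity
\beu
(\Lper-\lambda)^{-1} = (\Lper-\lambda_0)^{-1} + (\lambda-\lambda_0)(\Lper-\lambda)^{-1}(\Lper-\lambda_0)^{-1},
\eeu
combined with the fact that $\sch_p$ is a two-sided operator ideal, forces $(\Lper-\lambda)^{-1}\in\sch_p$ for every $\lambda\in\rho(\Lper)$. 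The same remark applies verbatim to the operator $A:=\Lper\upharpoonright_{\tilde D}$ on $\spa\{1\}^\perp$.

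Next I would exploit the orthogonal block decomposition already established. Since $\tilde D\subset\spa\{1\}^\perp$ and $\elle[u]\perp 1$ for every $u\in\DomL$, the operator $\Lper$ is block diagonal with respect to $L^2\Ito=\spa\{1\}^\perp\oplus\spa\{1\}$, namely $\Lper = A\oplus 0$. For any $\lambda\in\rho(\Lper)$ (necessarily $\lambda\neq 0$) the resolvent inherits this structure,
\beu
(\Lper-\lambda)^{-1} = (A-\lambda)^{-1}\oplus(-\lambda^{-1}),
\eeu
the second summand acting on the one-dimensional space $\spa\{1\}$. As the singular values of a block-diagonal operator are precisely the singular values of its blocks taken together (this is the statement that the generalised singular value decomposition respects the block structure, and follows from $B^*B$ being block diagonal), the finite-rank summand contributes only the single value $|\lambda|^{-1}$, which lies in every $\sch_p$. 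Hence $(\Lper-\lambda)^{-1}\in\sch_p$ if and only if $(A-\lambda)^{-1}\in\sch_p$.

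Then, since $0\notin\spec(A)$ and $A^{-1}=\tilde T$, the first paragraph applied to $A$ permits me to evaluate its resolvent at the convenient point $0$, so the resolvent of $A$ is in $\sch_p$ if and only if $\tilde T\in\sch_p$. Now $\tilde T = (I-\tfrac{1}{2\pi}|1\rangle\langle 1|)\,T\upharpoonright_{\spa\{1\}^\perp}$ differs from $T\upharpoonright_{\spa\{1\}^\perp}$ by a rank-one operator, and $\sch_p$ is invariant under finite-rank perturbations, so $\tilde T\in\sch_p$ if and only if $T\upharpoonright_{\spa\{1\}^\perp}\in\sch_p$. Finally, writing $P$ for the orthogonal projection onto $\spa\{1\}^\perp$, the operator $TP$ has exactly the singular values of $T\upharpoonright_{\spa\{1\}^\perp}$ (it vanishes on $\spa\{1\}$) and differs from $T$ by the rank-one term $T(I-P)$; therefore $T\upharpoonright_{\spa\{1\}^\perp}\in\sch_p\iff TP\in\sch_p\iff T\in\sch_p$. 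Chaining these equivalences gives the lemma.

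The main obstacle I would flag is not any single estimate but the bookkeeping around the spectral parameter and the trivial block: because $0\in\spec(\Lper)$ one cannot evaluate the full resolvent at $0$, so the argument must use a nonzero $\lambda$ for $\Lper$ while simultaneously exploiting that $0\in\rho(A)$ in order to identify $\tilde T$ with $A^{-1}$. Once the $\lambda$-independence of Schatten membership is secured by the resolvent identity, the remaining steps are routine consequences of the ideal property of $\sch_p$ together with the stability of singular values under finite-rank perturbations and orthogonal block decompositions.
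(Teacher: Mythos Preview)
Your argument is correct and follows the same route as the paper: the block decomposition $\Lper = A\oplus 0$, the identification $A^{-1}=\tilde T$, the rank-one discrepancy between $\tilde T$ and $T\upharpoonright_{\spa\{1\}^\perp}$, and the observation that singular values respect orthogonal blocks. The paper leaves the proof implicit in the paragraph preceding the lemma; you have simply made the bookkeeping explicit, in particular the use of the resolvent identity to handle the fact that $0\in\spec(\Lper)$ while $0\in\rho(A)$, and the final passage from $T\upharpoonright_{\spa\{1\}^\perp}$ to $T$.
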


Given an $r > 0$, we will denote
\beu
\sch_{p>r} = \bigcap_{p > r} \sch_p.
\eeu
In order to find the Schatten properties of $T$ we consider below a generic lemma which, keeping the notation tidy, we formulate in $L^2(0,\pi)$. 
It can be easily seen that the interval $(0,\pi)$ can be replaced with any other bounded interval.
\begin{figure}[hth]
\centerline{\includegraphics[height=6cm, angle=0]{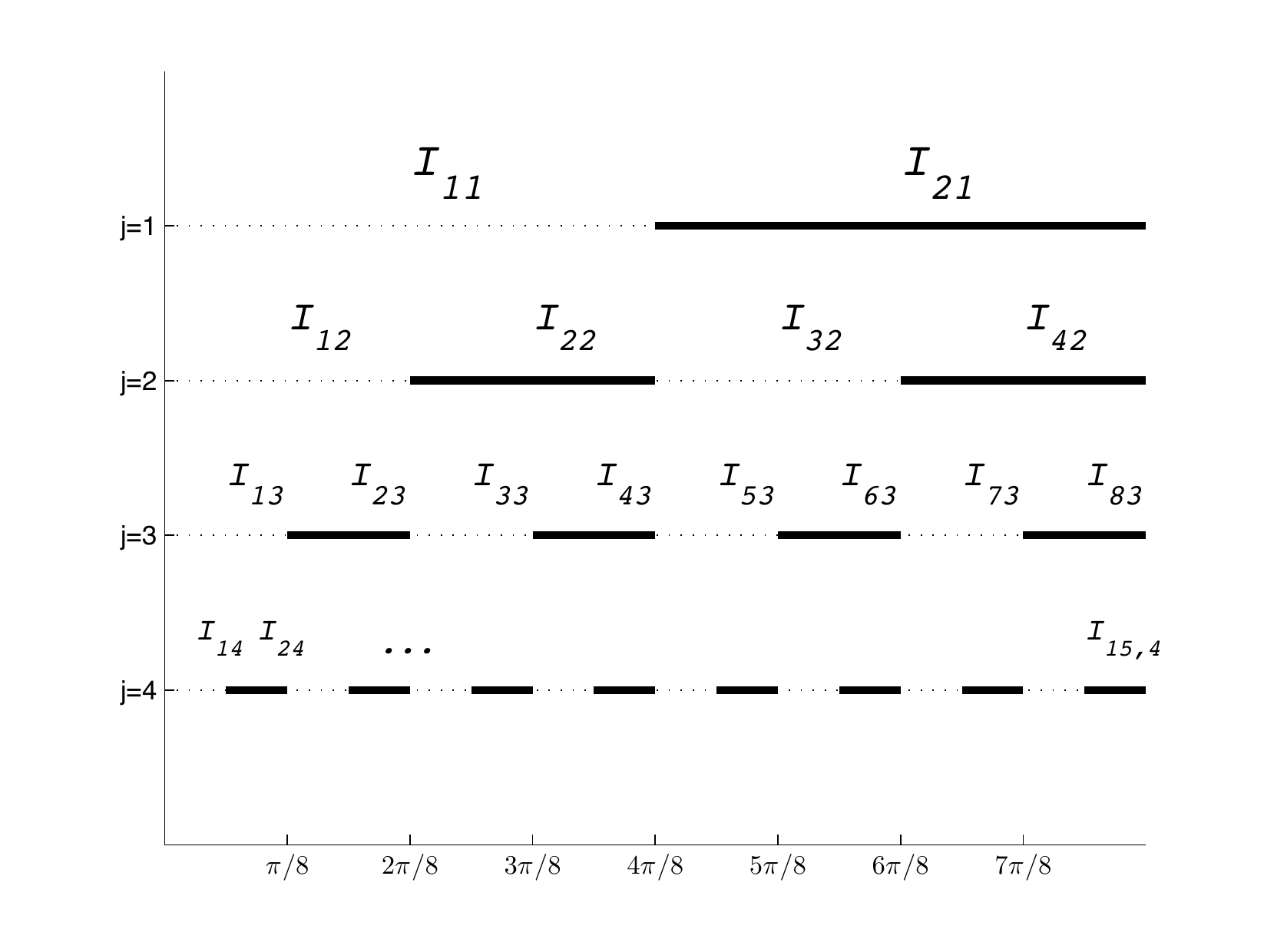}
\includegraphics[height=6cm, angle=0]{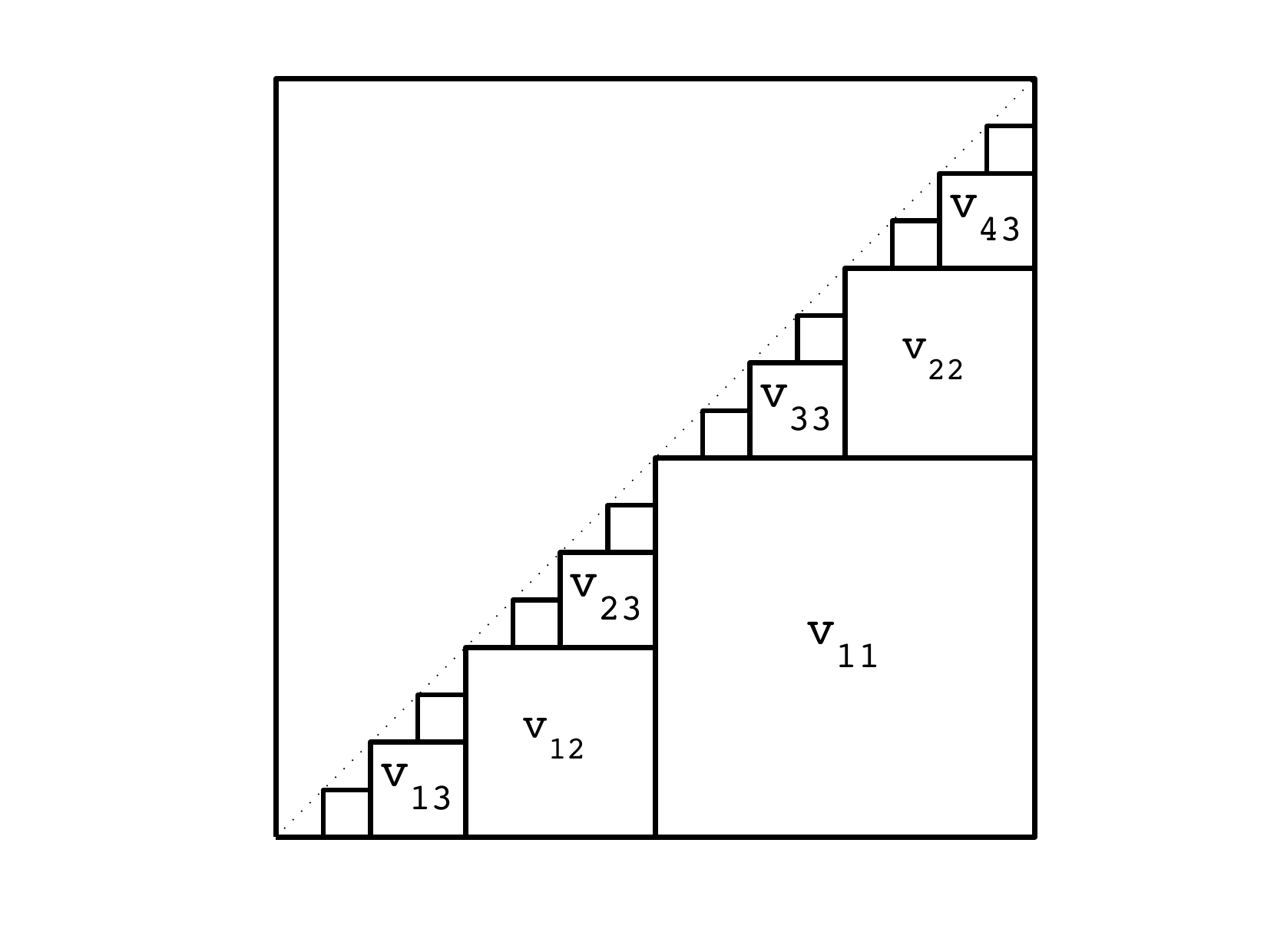}}
\caption{Left: For each fixed $j$ the intervals $I_{2k,j}$ and $I_{2k-1,j}$ are 
disjoint. Right: Illustration of the support of $v_{kj}(x,y)$. \label{fig2}}
\end{figure}

\begin{lem} \label{ablem}
Let $a,b \colon (0,\pi) \to \Cbb$ be two continuous functions and denote by $v(x,y) = a(x)b(y)$. Let
\beu
S(u)(x) = \int_0^x v(x,y) u(y) dy.
\eeu
If
\beu
v(x,y) = \left\{ \begin{array}{ll}
O(x^{-\alpha}), & \mbox{when} \quad (x,y) \to (0,0); \\
O((\pi - y)^{-\beta}), & \mbox{when} \quad (x,y) \to (\pi,\pi),
\end{array} \right.
\eeu
for some $0 \leq \alpha,\beta < 1/2$, then $S \in \sch_{p>r(\alpha,\beta)}$ where
\beu
r(\alpha,\beta) = \max\{1/(1-\alpha),1/(1-\beta)\}.
\eeu
\end{lem}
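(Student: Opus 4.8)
\emph{Strategy.} The kernel of $S$ is $K(x,y)=v(x,y)\chi_{\{0<y<x\}}$, supported on the region strictly below the diagonal of $\Ipl\times\Ipl$. The plan is to tile this triangular region dyadically and to estimate the resulting pieces through a single observation: since $v(x,y)=a(x)b(y)$ is separable, the restriction of $S$ to any rectangle $I\times J$ lying entirely below the diagonal is a \emph{rank-one} operator, and for a rank-one operator the $\sch_p$-norm equals the Hilbert--Schmidt norm for every $p$. Summing these rank-one contributions while keeping track of orthogonality at each scale is what will let us pass below the Hilbert--Schmidt threshold $p=2$ and reach the exponent $r(\alpha,\beta)<2$.

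\emph{Decomposition and orthogonality.} I would first record the elementary fact that if operators $\{B_\iota\}$ have pairwise orthogonal ranges and pairwise orthogonal co-ranges (the ranges of $B_\iota^*$), then $\sum_\iota B_\iota$ is unitarily equivalent to $\bigoplus_\iota B_\iota$, so that $\|\sum_\iota B_\iota\|_{\sch_p}^p=\sum_\iota\|B_\iota\|_{\sch_p}^p$ for all $0<p<\infty$. Next, for each $j\ge1$ partition $\Ipl$ into the $2^j$ dyadic intervals $I_{i,j}$ of length $\pi2^{-j}$ and, for $k=1,\dots,2^{j-1}$, let $v_{kj}=v\,\chi_{I_{2k,j}\times I_{2k-1,j}}$ with associated operator $S_{kj}$. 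Because $I_{2k,j}$ lies entirely to the right of $I_{2k-1,j}$, each such block sits below the diagonal and $S_{kj}$ is rank one; and as $k$ varies the $x$-projections $I_{2k,j}$ are pairwise disjoint and the $y$-projections $I_{2k-1,j}$ are pairwise disjoint (Figure~\ref{fig2}). These blocks tile $\{0<y<x\}$ up to a null set. Writing $S_j=\sum_{k}S_{kj}$, the orthogonality fact gives $\|S_j\|_{\sch_p}^p=\sum_{k}\|S_{kj}\|_{\sch_p}^p=\sum_{k}\|v\|_{L^2(I_{2k,j}\times I_{2k-1,j})}^p$, the last quantity being the $L^2$-norm of $v$ over the block.

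\emph{Block bound and summation.} The hypotheses give a uniform bound on each block. Away from the two corners $v$ is bounded (one checks that, unless $a$ or $b$ vanishes identically near the relevant endpoint, the corner conditions force $a$ to be bounded near $\pi$ and $b$ near $0$); a block abutting the origin has $x\sim\pi2^{-j}$, so $|v|\lesssim 2^{j\alpha}$ there, and a block abutting $\pi$ has $\pi-y\sim\pi2^{-j}$, so $|v|\lesssim 2^{j\beta}$. In every case $\sup|v|\le C2^{j\max\{\alpha,\beta\}}$ on the block, and the block having area $\sim2^{-2j}$ we obtain $\|S_{kj}\|_{\sch_p}=\|v\|_{L^2(I_{2k,j}\times I_{2k-1,j})}\le C2^{j\max\{\alpha,\beta\}}2^{-j}=C2^{-j(1-\max\{\alpha,\beta\})}$ uniformly in $k$. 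Since there are $2^{j-1}$ blocks at level $j$, $\|S_j\|_{\sch_p}\le(2^{j-1})^{1/p}C2^{-j(1-\max\{\alpha,\beta\})}\le C'2^{j(1/p-1+\max\{\alpha,\beta\})}$. As $r(\alpha,\beta)\ge1$ we only deal with $p>1$, so $\|\cdot\|_{\sch_p}$ is a norm and the triangle inequality yields $\|S\|_{\sch_p}\le\sum_{j\ge1}\|S_j\|_{\sch_p}\le C'\sum_{j\ge1}2^{j(1/p-1+\max\{\alpha,\beta\})}$. This geometric series converges exactly when $1/p<1-\max\{\alpha,\beta\}$, that is when $p>1/(1-\max\{\alpha,\beta\})=\max\{1/(1-\alpha),1/(1-\beta)\}=r(\alpha,\beta)$, proving $S\in\sch_p$ for every $p>r(\alpha,\beta)$.

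\emph{Main obstacle.} The tiling combinatorics and the identity $\|S_{kj}\|_{\sch_p}=\|S_{kj}\|_{\sch_2}$ for rank-one blocks are routine. The real content is the interaction between the triangular structure and the singular corners: the decomposition must be arranged so that every block lies strictly below the diagonal---this is exactly where separability of $v$ is used and where the improvement over the Hilbert--Schmidt exponent originates---and so that blocks of a common scale have orthogonal ranges and co-ranges, making their norms add in $\ell^p$ rather than in $\ell^1$. The other delicate point is the uniform kernel bound: the corner hypotheses only constrain $v$ along the diagonal approaches to $(0,0)$ and $(\pi,\pi)$, and one must deduce from them that $v$ is bounded on the bulk of the triangle, so that the intermediate blocks contribute only the factor $2^{-j}$. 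Bounding $\|S_j\|_{\sch_p}$ by the number of blocks times the worst block norm is what pins the threshold at $r(\alpha,\beta)$.
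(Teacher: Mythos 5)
Your proof is correct and follows essentially the same route as the paper's: the same dyadic (Whitney-type) tiling of the sub-diagonal triangle into rank-one blocks whose ranges and co-ranges are pairwise orthogonal at each scale $j$, the same uniform per-block bound of order $2^{-j(1-\max\{\alpha,\beta\})}$, and the same $\ell^p$-in-$k$, $\ell^1$-in-$j$ summation. The only differences are cosmetic: you obtain the block bound from $\sup|v|$ times the square root of the block's area rather than the paper's explicit integral computation, and you use the single exponent $\max\{\alpha,\beta\}$ throughout instead of splitting the scale-$j$ blocks into a lower family (estimated with $\alpha$) and an upper family (estimated with $\beta$), which yields the same threshold $r(\alpha,\beta)$.
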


\begin{proof}
Let $\chi_{ij} = \chi_{I_{ij}}$ be the characteristic function of the interval $I_{ij} = [\frac{(i-1)\pi}{2^{j}},\frac{i\pi}{2^{j}})$ for $i = 1,\dots,2^j$ and $j \in\Nbb$. See Figure~\ref{fig2} (left). By construction, we have the pointwise equality
\beu
v(x,y) \chi_\Omega(x,y) = \sum_{j=1}^\infty v_j(x,y),
\eeu
where 
\beu
v_j(x,y) = \sum_{k=1}^{2^{j-1}} a(x)\chi_{2k,j}(x) b(y) \chi_{2k-1,j}(y) = \sum_{k=1}^{2^{j-1}} v_{kj}(x,y)
\eeu 
and $\chi_\Omega$ is the characteristic function of 
$\Omega = \{(x,y) \, | \, 0 < y < x < \pi \}$. See Figure~\ref{fig2} (right).

Let $S_j = \sum_{k=1}^{2^{j-1}} S_{kj}$ where
\beu
S_{kj}u(x) = |b \chi_{2k-1,j}\rangle\langle a \chi_{2k,j}|u(x) = \int_0^\pi v_{kj}(x,y)u(y) dy.
\eeu
Since, for each fixed $j$, all the intervals indexed by $k$ are disjoint, this is a singular value decomposition for $S_j$. The $p$th Schatten class norm of $S_{kj}$ is
\beu
\alpha_{kj} := \|S_{kj}\|_p = \|a\|_{L^2(I_{2k,j})}\|b\|_{L^2(I_{2k-1,j})}
\eeu
for all $1 \leq p \leq \infty$ (in particular, it is independent of $p$). Thus, if we prove that
\be \label{decom-0}
\sum_{j=1}^\infty \|S_j\|_p = \sum_{j=1}^\infty \left( \sum_{k=1}^{2^{j-1}} |\alpha_{kj}|^p\right)^\frac{1}{p} < \infty,
\ee
we would have that $S \in \sch_p$ as the sum $S = \sum_{j=1}^\infty S_j$ would converge in the $p$th Schatten norm $\| \cdot \|_p$.

For $j \geq 2$ we can certainly separate the sum
\beu
S_j = \sum_{k=1}^{2^{j-2}} S_{kj} + \sum_{k=2^{j-2}}^{2^{j-1}} S_{kj} = S_j^\ell + S_j^u
\eeu
and
\beu
S = S_{11} + \sum_{j=2}^\infty S_j^\ell + \sum_{j=2}^\infty S_j^u = S_{11} + S^\ell + S^u,
\eeu
so the proof will be complete (as was the case with \eqref{decom-0}) if we can show
\begin{align}
\sum_{j=2}^\infty \|S_j^\ell\|_p & < \infty \quad \mbox{for} \quad p > \frac{1}{1-\alpha} \quad \mbox{and} \label{decom-1} \\
\sum_{j=2}^\infty \|S_j^u\|_p & < \infty \quad \mbox{for} \quad p > \frac{1}{1-\beta}. \label{decom-2}
\end{align}
We will just prove \eqref{decom-1}, the proof of \eqref{decom-2} being similar.

The hypotheses of the lemma guarantee that there exists a constant $c_8 > 0$ such that 
\beu
|v(x,y)| \leq c_8 x^{-\alpha} \quad \mbox{for all} \quad (x,y) \in \bigcup_{\tiny \begin{array}{c} j \geq 2 \\ 1 \leq k \leq 2^{j-2} \end{array}} I_{2k,j} \times I_{2k-1,j}.
\eeu
Then, for $1 \leq k \leq 2^{j-2}$,
\bes \label{eight}
\alpha_{kj}^2 & = \int_{I_{2k,j}} \int_{I_{2k-1,j}} |v(x,y)|^2 dx dy \\
& \leq c_8 \int_{I_{2k,j}} \int_{I_{2k-1,j}} x^{-2\alpha} dx dy = c_8 \pi 2^{-j} \int_{\pi(2k-1)2^{-j}}^{\pi(2k)2^{-j}} x^{-2\alpha} dx \\
& = c_9 2^{-j} \left[\left(\frac{2k}{2^{j}}\right)^{1-2\alpha} - \left(\frac{2k-1}{2^{j}}\right)^{1-2\alpha}\right].
\ees
Letting $N = 1/(1-2\alpha)$, so that $0 \leq \alpha < 1/2$ if and only if $N \geq 1$. We have that
\beu
1-r \leq \frac{1-r^N}{1+r^{N+1}} \quad \mbox{for all $0 \leq r \leq 1$ and $N \geq 1$}.
\eeu
Thus the right-hand side of \eqref{eight} becomes
\besu
& c_92^{-j}\left(\frac{2k}{2^j}\right)^\frac{1}{N}\left[ 1 - \left(\frac{2k-1}{2k}\right)^\frac{1}{N}\right]
\leq c_92^{-j(1 + 1/N)}\left(2k\right)^\frac{1}{N}\left[\frac{1 - \frac{2k-1}{2k}}{1 + \left(\frac{2k-1}{2k}\right)^\frac{N-1}{N}}\right] \\
& = c_92^{-j(1 + 1/N)}\left(2k\right)^\frac{1}{N}\left[\frac{1}{\left(2k\right)^\frac{N-1}{N} + \left(2k-1\right)^\frac{N-1}{N}}\right]
\leq c_92^{-j(1 + 1/N)}.
\eesu
Therefore $\alpha_{kj} \leq c_9 2^{-j(1 + 1/N)/2}$ for $1 \leq k \leq 2^{j-2}$. Thus
\beu
\|S_j^\ell \|_p = \left(\sum_{k=1}^{2^{j-2}} \alpha_{kj}^p \right)^\frac{1}{p} \leq c_9\left(2^{j-2}2^{-jp(1 + 1/N)/2}\right)^\frac{1}{p} = c_9 (2^\frac{1}{p})^{j(1 - p(1 + 1/N)/2)}.
\eeu
And so, $\sum_{j=1}^\infty \|S_j^\ell \|_p$ converges if $1 - p(1 + 1/N)/2 < 0$, which is equivalent to $p > 1/(1-\alpha)$.
\end{proof}

By virtue of this lemma we are able to prove the following theorem.

\begin{thm} \label{schatten}
$T \in \sch_{p>2/3}$.
\end{thm}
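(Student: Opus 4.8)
The plan is to reduce to $\Ipl$ and then adapt the dyadic scheme from the proof of Lemma~\ref{ablem} to the \emph{whole} kernel $G$, exploiting the cancellation encoded in $G(x,x)=0$. First I would use the support of $G$ in \eqref{green_ker}: for $x>0$ the value $TF(x)$ depends only on $F|_{(0,x)}$ and for $x<0$ only on $F|_{(x,0)}$, so $T=T_-\oplus T_+$ relative to $L^2\Ito=L^2\Imi\oplus L^2\Ipl$. Since $\|T\|_{\sch_p}^p=\|T_-\|_{\sch_p}^p+\|T_+\|_{\sch_p}^p$ and the two blocks are interchanged by $x\mapsto-x$ (the symmetries of $f$ and of \eqref{asym-1}), it suffices to place $T_+$, the operator on $L^2\Ipl$ with kernel $G(x,y)=1-\psi(x)/\psi(y)$ on $\{0<y<x<\pi\}$, in $\sch_p$ for every $p>2/3$.

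Next I would isolate two features of $G$. Since $\psi'/\psi=-1/(\varepsilon f)$ and $\psi$ decreases on $\Ipl$, for $0<y<x$ one has $\psi(y)\ge\psi(t)$ on $[y,x]$ and hence
\[
|G(x,y)|=\frac{1}{\psi(y)}\int_y^x\big(-\psi'(t)\big)\,dt\le\int_y^x\frac{dt}{\varepsilon f(t)},
\]
so $G$ vanishes on the diagonal and is small near it. Secondly, and decisively, $G$ is a difference of two separable kernels, $G(x,y)=1\cdot 1-\psi(x)\,\psi(y)^{-1}$. Consequently, on any rectangle $I\times J$ with $\sup J\le\inf I$ the operator $F\mapsto\int_J G(\cdot,y)F(y)\,dy$ maps into $\spa\{1,\psi\}$, so it has rank at most $2$.

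I would then run the decomposition from the proof of Lemma~\ref{ablem} unchanged, with the same $I_{ij}$, $\chi_{ij}$ and block operators $S_{kj}$. Each block $I_{2k,j}\times I_{2k-1,j}$ satisfies $\sup I_{2k-1,j}\le\inf I_{2k,j}$, so by the second fact $S_{kj}$ has rank $\le 2$, and therefore $\|S_{kj}\|_p\le 2^{1/p}\|S_{kj}\|_2=2^{1/p}\|G\chi_{I_{2k,j}\times I_{2k-1,j}}\|_{L^2}$ for all $p$. This replaces the rank-one identity of Lemma~\ref{ablem}. Combining the first fact with $f(t)\sim ct$ as $t\to0$ and $f(t)\sim c(\pi-t)$ as $t\to\pi$, a direct computation gives $\|S_{kj}\|_2\lesssim k^{-1}2^{-j}$ for the blocks inside $(0,\pi/2)$ and the mirror bound near $\pi$. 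Summing over $k$ and $j$ exactly as in the proof of Lemma~\ref{ablem} (using the $p$-triangle inequality of $\sch_p$ when $p<1$) then shows $\sum_{j}\big(\sum_k\|S_{kj}\|_p^p\big)^{1/p}<\infty$ for every $p>2/3$ --- indeed for every $p>1/2$ --- so $T_+\in\sch_{p>2/3}$, completing the proof.

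The step I expect to be the crux is the rank-$2$ observation. Lemma~\ref{ablem} is stated for a single product $a(x)b(y)$, and the obvious move of splitting $G=1-\psi(x)/\psi(y)$ and treating the two product kernels separately is fatal: each piece has block Hilbert--Schmidt norm of size $2^{-j}$ with no decay in $k$, and summing returns only $\sch_{p>1}$. One must keep $G$ intact so that the cancellation in the first fact survives; the price is that the block operators are no longer rank one, but because $G$ is a difference of two separable terms they are still rank $\le 2$, which is exactly enough to control their Schatten-$p$ norms, uniformly in $p$, by the (now small) Hilbert--Schmidt norm.
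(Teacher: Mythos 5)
Your proposal is correct, and it takes a genuinely different route from the paper. The paper isolates the two singular corners, differentiates the kernel ($K_1^+=\partial_x G$, $K_3^+=\partial_y G$), and factors each corner piece as a composition of a Volterra-type operator (handled by Lemma~\ref{ablem}, hence only in $\sch_q$ for $q>1$) with a Hilbert--Schmidt operator; the Schatten--H\"older inequality $1/p=1/q+1/2$ then caps that method at $p>2/3$. You instead apply the dyadic Whitney decomposition of Lemma~\ref{ablem} directly to the intact kernel $G$, and your two structural observations are exactly the right ones: the bound $|G(x,y)|\le\int_y^x(\varepsilon f(t))^{-1}dt$ (which follows from $\psi'/\psi=-1/(\varepsilon f)$ and the monotonicity of $\psi$ on $\Ipl$, and gives $\sup|G|\lesssim \min\{1,1/k\}$ on the $k$th block near $0$, with the mirror bound near $\pi$ and the trivial bound $|G|\le 1$ on the two corner blocks), and the rank-$\le 2$ property of each block operator coming from $G=1\cdot 1-\psi(x)\cdot\psi(y)^{-1}$, which is what licenses $\|S_{kj}\|_p\le 2^{1/p-1/2}\|S_{kj}\|_2$ and is unavailable for a generic Lipschitz kernel. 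Since the blocks at a fixed level $j$ act between mutually orthogonal subspaces, $\|S_j\|_p^p=\sum_k\|S_{kj}\|_p^p\lesssim 2^{-jp}\sum_k k^{-p}\lesssim 2^{j(1-2p)}$, and Rotfel'd's $p$-triangle inequality closes the argument. One presentational point: for $p<1$ the convergence of $\sum_j S_j$ in $\sch_p$ should be phrased via $\sum_j\|S_j\|_p^p<\infty$ rather than $\sum_j\|S_j\|_p<\infty$ (both hold here since the bounds are geometric in $j$). Note that, written out, your argument actually sharpens the theorem to $T\in\sch_{p>1/2}$, which is optimal: by Weyl's inequality the eigenvalues $\mu_n(T)\asymp n^{-2}$ (equivalently, counting function of order $2$) already force $T\notin\sch_{1/2}$. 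What the paper's factorisation buys in return is that it never needs the precise near-diagonal cancellation, only the corner asymptotics \eqref{asym-1}--\eqref{asym-2}, but it pays for this with the Volterra bottleneck.
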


\begin{proof}
Let $\Omega^{\pm}=\{(x,y)\in\Rbb^2:0<\pm y<\pm x<\pm \pi\}$. Let
\beu
G^{\pm}(x,y)=\pm \left(1-\frac{\psi(x)}{\psi(y)}\right)\chi_{\Omega^{\pm}}(x,y).
\eeu
Then $T=T^+ +T^-$ where 
\beu
    T^{\pm}u(x)=\int_{-\pi}^\pi G^{\pm}(x,y) u(y) dy.
\eeu
The proof reduces to showing that $T^\pm \in \sch_{p>2/3}$. We shall only 
give the details for the case of $T^+$, the other case being analogous.

Set $T^+=T^+_1+T^+_2+T^+_3$ where
\begin{gather*}
T^+_{j}u(x)  = \int_{-\pi}^{\pi} G_j^+(x,y)u(y) dy \qquad
\text{for} \qquad G_j^+(x,y)= G^+(x,y)\chi_{\Omega_j^+}(x,y), \\
\Omega_1^+=\{0<x<y<\pi/2\}, \qquad
\Omega_2^+=[\pi/2,\pi]\times[0,\pi/2], \\
\text{and} \qquad \Omega_1^+=\{\pi/2<x<y<\pi\}.
\end{gather*}
We prove that each of the $T^+_j \in \sch_{p>2/3}$.

Note that $T^+_2\in\sch_{p>0}$ as it is of rank two. Let us show that $T^+_1 \in \sch_{p>2/3}$. Let
\beu
K_1^+(x,y) = \partial_xG(x,y)\chi_{\Omega_1^+}(x,y) = 
\left\{ \begin{array}{ll}
\frac{\psi'(x)}{\psi(y)}, & \mbox{when} \quad (x,y) \in \Omega_1^+; \\
0, & \mathrm{otherwise}.
\end{array} \right.
\eeu
Then, for any $0 < \alpha < 1/2$,
\besu
T_{1}^+u(x) & = \int_{0}^{\pi/2} \int_0^x K_1^+(z,y)u(y) dz dy \\
& = \int_0^x z^{-\alpha} \int_{0}^{\pi/2} z^\alpha K_1^+(z,y)u(y) dy dz = S_{1\alpha}^+(
R_{1\alpha}^+[u])(x).
\eesu
By virtue of Lemma \ref{ablem}, $S_{1\alpha}^+ \in \sch_{p > 1/(1-\alpha)}$. On the other hand, by \eqref{asym-2}, \eqref{asym-1} and \eqref{calc}, $|z^\alpha K_1^+(z,y)| \sim z^{\alpha - 1}$ for $(z,y) \sim (0,0)$, so
\beu
\int_0^\pi \int_0^{\pi/2} |z^\alpha K_1^+(z,y)|^2 dy dz < \infty
\eeu
whenever $\alpha > 0$. Thus $R_{1\alpha}^+ \in \sch_2$. As
$\alpha$ can be taken arbitrarily close to zero, $\|T_1^+\|_p\leq 2^{1/p}
\|S^+_{1\alpha}\|_q \|R^+_{1\alpha}\|_2$ with $1/p=1/q+1/2$ and $q>1$
but arbitrarily close to one. Therefore $T^+_1 \in \sch_{p>2/3}$.

Arguing similarly for $T^+_{3}$ we set
\beu
K^+_3(x,y) = \partial_yG(x,y)\chi_{\Omega_3^+}(x,y) = \left\{ \begin{array}{ll}
-\frac{\psi(x)\psi'(y)}{\psi(y)^2}, & \mbox{when} \quad (x,y) \in \Omega^+_3; \\
0, & \text{otherwise}.
\end{array} \right.
\eeu
Then, for any $0 < \beta < 1/2$,
\besu
T^+_{3}u(x) & =  -\int_{\pi/2}^{\pi} \int_y^\pi 
K_3^+(x,z)u(y) dz dy \\
& = -\int_{\pi/2}^{\pi} K_3^+(x,z)(\pi - z)^\beta \left(\int_{\pi/2}^z (\pi - z)^{-\beta} u(y) dy\right) dz \\
& = R_{3\beta}^+(S_{3\beta}^+[u])(x).
\eesu
Once again, by Lemma \ref{ablem}, $S_{3\beta}^+ \in \sch_{p > 1/(1-\beta)}$ 
and $|(\pi - z)^\beta K_3^+(x,z)| \sim (\pi - z)^{\beta - 1}$ 
for $(x,z) \sim (\pi,\pi)$, so
\beu
\int_0^\pi \int_{\pi/2}^{\pi} |K_3^+(x,z)(\pi - z)^\beta|^2 dz dx < \infty
\eeu
whenever $\beta > 0$. Thus $R_{3\beta}^+ \in \sch_2$. 
As $\beta$ can be taken arbitrarily close to zero, the proof follows in similar
fashion as the previous case.\end{proof}


\section{The forward-backward heat equation} \label{evol_prob}

By virtue of \cite[Theorem~3.3]{2010BLM}, the spectrum of $\Lper$ is contained in the purely imaginary axis. 
If $i \Lper$ was similar to a self-adjoint operator, it would be the generator of a unitary one-parameter
semigroup. In \cite{2009Daviesetal} it was shown that the latter is actually not the case for $f(x)=\sin(x)$. In this context we now examine more closely the evolution equation associated to $\Lper$ following the ideas of \cite{2009Chugunovaetal}.  

Let $T>0$. Consider the evolution problem
\begin{equation} \tag{B} \label{B}
 \left\{\begin{aligned}
 &\partial_t u(t,x)+\Lper u(t,x)=0 & \text{a.e. } (t,x)\in (0,T)\times \Ito \\
 &u(t,\cdot)\in \DomL & \forall t\in (0,T) \\
 &u(0,x) =g(x)  & \text{a.e. } x\in\Ito.
 \end{aligned} \right.
\end{equation}
We wish to define in a precise manner the notion of a solution of \eqref{B}. If $\Lper \phi=i\lambda \phi$ for $\lambda\in \Rr$ and $g(x)=\phi(x)$,
then formally we have a global solution of \eqref{B} given by $u(t,x)=e^{-i\lambda t} \phi(x)$
for any $T>0$. In an analogous fashion, we can generate solutions which are global whenever
$g$ is a finite linear combination of eigenfunctions of $\Lper$.  
      
We will denote the space of \emph{admissible trajectories} in which the solutions of
\eqref{B} lie by
\[
    \CT_T:=W^{1,2}_{2,\loc}\big((0,T)\times [\Imi\cup\Ipl]\big)\cap C\left([0,T); L^2\Ito\right).
\]

Here and below we follow closely the notation of \cite{2008Krylov}, where $W^{r,k}_2(\Omega)$ is the \emph{parabolic Sobolev space} for an open region $\Omega\subset \Rr^2$ of function with derivatives in the $x$-variable up to order $k$ and derivatives in the $t$-variable up to order $r$. The space $W^{r,k}_{2,\loc}(\Omega)$ is defined by saying $v\in W^{r,k}_{2,\loc}(\Omega)$ iff $\phi v\in  W^{r,k}_2(\Rr^2)$ for all smooth cut-off functions $\phi$ whose support is inside a compact proper subset of $\Omega$. Note that if $v \in \CT_T$,
then $\partial_tv+\Lper v\in L^2((0,T)  \times \Ito)$. Here and below, expressions involving partial derivatives
will always mean partial derivatives in the distributional (Sobolev) sense.  
 
By a \emph{solution} of \eqref{B} we mean $u\in \CT_T$ satisfying all conditions
in \eqref{B} for some $T>0$.

To simplify notation, we will denote functions and their corresponding restrictions to subdomains (or extensions
to larger domains) with the same letter. Extensions to larger domains are
assumed to be ``up to a set of measure zero''. 

In the following lemma it is crucial that $I_0\subset \Ipl$, however note that
there are no restrictions on the position of $J$ relative to $t=0$.

\begin{lem} \label{auxiliary2}
Let $f\in C^k(\Rr)$ for some $k\in \Nn$. Let $I_0\subset \overline{I}_0\subset \Ipl$ and
$J\subset \Rr$ be two open intervals. If $v\in W^{1,2}_{2,\loc}(J\times I_0)$ is
such that $\partial_tv+\elle v=0$, then $v\in W^{1,2+k}_{2}(V)$ for any $V\subset J\times I_0$.
\end{lem}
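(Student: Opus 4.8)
The strategy is to treat the equation, restricted to the strip $J\times I_0$, as a uniformly parabolic equation with coefficients as smooth as $f$ allows, and then to run the classical interior $L^2$ regularity bootstrap, differentiating only in the spatial variable. First I would write the equation out: expanding $\elle$, the relation $\partial_t v+\elle v=0$ becomes
\[
\partial_t v+\varepsilon f\,\partial_x^2 v+(\varepsilon f'+1)\,\partial_x v=0 .
\]
Since $\overline{I}_0$ is a compact subset of $\Ipl$ and $f>0$ throughout $\Ipl$, there are constants $0<m\le M$ with $m\le\varepsilon f\le M$ on $\overline{I}_0$, and on $\overline{I}_0$ the leading coefficient $\varepsilon f$ is of class $C^k$ while the drift $\varepsilon f'+1$ is of class $C^{k-1}$. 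The coefficient of $\partial_x^2 v$ is positive, so the equation is \emph{backward} parabolic; after the reflection $s=-t$ it becomes a forward, uniformly parabolic equation to which the interior theory of \cite{2008Krylov} applies verbatim. It is precisely the hypothesis $\overline{I}_0\subset\Ipl$ that secures the lower bound $\varepsilon f\ge m>0$ and thereby rules out the degeneracy of $\elle$ at $x=0,\pm\pi$; there is no analogous constraint on $J$ because no regularity of $f$ in the $t$-variable is at stake.

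Second, I would carry out the bootstrap in the spatial variable. The base case $v\in W^{1,2}_{2,\loc}(J\times I_0)$ is the hypothesis. The inductive step rests on differentiating the equation once in $x$: if $w=\partial_x v$ then, formally,
\[
\partial_t w+\varepsilon f\,\partial_x^2 w+(\varepsilon f'+1)\,\partial_x w=\Phi ,
\]
where $\Phi$ is a linear combination of derivatives of $f$ multiplied by spatial derivatives of $v$ of one order lower than the target. This manipulation is made rigorous through spatial difference quotients, which are legitimate here because we translate only in the $x$-direction and $\overline{I}_0$ is compact in $\Ipl$, so all translates stay inside the region of uniform parabolicity. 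Feeding the already controlled lower-order derivatives into $\Phi$ shows $\Phi\in L^2_{\loc}$, whereupon the interior $W^{1,2}_2$ a priori estimate for the uniformly parabolic operator above gives $w\in W^{1,2}_{2,\loc}$, that is $v\in W^{1,3}_{2,\loc}$. The single time derivative is never lost: at each stage $\partial_t$ of a spatial derivative of $v$ is read off directly from the differentiated equation as $-\varepsilon f$ times two further spatial derivatives plus lower-order terms, which lie in $L^2_{\loc}$ as soon as those spatial derivatives do. Iterating this step finitely many times, and restricting at the end to any $V$ with $\overline{V}$ compact in $J\times I_0$, yields $v\in W^{1,2+k}_2(V)$.

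The genuinely delicate point is the bookkeeping that ties the number of iterations to the smoothness of $f$. One must check at each stage exactly which derivatives of $f$ enter $\Phi$ — the drift $\varepsilon f'+1$ is only of class $C^{k-1}$, so the worst term couples the highest available derivative of $f$ with the lowest-order derivative of $v$ — and confirm that this worst term still lands in $L^2_{\loc}$ under the hypothesis $f\in C^k$; keeping the top-order contribution in divergence form, so that one derivative can be integrated by parts onto the test function, is the device that makes the count close. Everything else is classical: the interior parabolic estimate itself is quoted from \cite{2008Krylov}, and because all differentiation is performed in $x$ alone, no smoothness of $v$ or of the coefficients in $t$ is required beyond what is already available, which is why only spatial regularity (the index $2+k$) is gained while the time index remains $1$.
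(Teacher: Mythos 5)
Your proposal is correct and follows essentially the same route as the paper: both exploit $\overline{I}_0\subset\Ipl$ to make $\partial_t+\elle$ uniformly parabolic on the strip and then bootstrap one spatial derivative per available derivative of $f$ using Krylov's interior $L^2$ theory. The paper implements the induction step slightly differently --- multiplying by a cutoff $\zeta$, observing that $(\partial_t+\elle)(\zeta v)=v(\partial_t+\elle)\zeta+2\varepsilon f\zeta'v'$ lies in $W^{0,1}_2(\Rr^2)$, and quoting \cite[Corollary~2.3.3]{2008Krylov} directly rather than differentiating the equation via difference quotients --- but this is the same argument in substance, and your bookkeeping concern about which derivatives of $f$ enter at each stage is shared (and handled no more explicitly) by the paper's own iteration.
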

\begin{proof}The proof follows closely that of \cite[Corollary~2.4.1]{2008Krylov}.
Let $V_0,\,V_1$ be two open sets, such that $V_0\subset \overline{V_0} \subset
V_1\subset \overline{V_1} \subset J\times I_0$. Consider a cutoff
function associated to $V_0$ and $V_1$: $\zeta\in C^\infty_c(\Rr^2)$,
such that $\zeta(t,x)=1$ for $(t,x)\in V_0$ and  $\zeta(t,x)=0$ for $(t,x)\in \Rr^2\setminus V_1$. A straightforward calculation yields
\[
     (\partial_t+\elle)(\zeta v)=[v(\partial_t+\elle)\zeta]+[2\varepsilon f\zeta'v'].
\]
Elementary properties of the parabolic Sobolev spaces ensure that the first summing term on the right lies in $W^{1,2}_2(\Rr^2)$ and the
second one lies in $W^{1,1}(\Rr^2)$. Note that here $k\geq 1$ is required.
Thus the whole expression lies in $W^{0,1}_2(\Rr^{2})$. 

By virtue of \cite[Corollary~2.3.3]{2008Krylov}, $\zeta v\in W^{1,3}_2(\Rr^2)$.
As $\zeta=1$ in $V_0$, the parabolic Sobolev embedding theorem ensures
that $u\in W^{1,3}(V_0)$. This shows the lemma for $k=1$. If $k>1$, on the other hand,
the argument can be repeated until we get $v\in W^{1,2+k}_2(V)$.
\end{proof}

We now combine Lemma~\ref{auxiliary2} with \cite[Theorem~2.2.6]{2008Krylov}, in order to get the 
following non-existence result for \eqref{B}.

\begin{thm} \label{singularity_I+}
Let $f\in C^k(\Rr)$ for some $k\in \Nn$. Let $g\in L^2\Ito$. If there exists
a solution $u\in \CT_\delta$ of \eqref{B} for some
$\delta>0$, then $g\in C^\kappa(I_0)$ for any $I_0\subset \overline{I_0}\subset \Ipl$
where
\[
        \kappa =\left\{ \begin{aligned} &\frac{k-1}{2} & k\ \text{odd} \\
             & \frac k2 & k\ \text{even}. 
             \end{aligned} \right.
\]
\end{thm}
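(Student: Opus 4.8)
The plan is to bootstrap spatial regularity of $g$ from the parabolic regularity of the solution $u$, exploiting that on any interval $I_0$ compactly contained in $\Ipl$ the operator $\elle$ is genuinely uniformly parabolic (since $f$ is smooth and strictly positive there). First I would fix $I_0$ with $\overline{I_0}\subset\Ipl$ and pick open intervals $I_1,J_1$ with $\overline{I_0}\subset I_1\subset\overline{I_1}\subset\Ipl$ and $(0,\delta/2)\subset J_1\subset(0,\delta)$. Since $u\in\CT_\delta$, by definition $u\in W^{1,2}_{2,\loc}\big((0,\delta)\times[\Imi\cup\Ipl]\big)$ and $\partial_tu+\elle u=0$ in $J_1\times I_1$. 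Applying Lemma~\ref{auxiliary2} on $J_1\times I_1$ gives $u\in W^{1,2+k}_2(V)$ for any $V\subset J_1\times I_1$; in particular I may choose $V$ containing a strip $(0,\delta/2)\times I_0$ up to closure.

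Next I would convert this parabolic Sobolev regularity into continuity of $u$ down to the initial time $t=0$, and then evaluate at $t=0$. The key external input is \cite[Theorem~2.2.6]{2008Krylov}, the parabolic Sobolev embedding theorem, which trades parabolic derivatives for classical H\"older or $C^m$ regularity: a function in $W^{1,2+k}_2$ of two variables embeds into a space of functions that are $C^m$ in the spatial variable, where $m$ counts how many spatial derivatives survive after accounting for the parabolic scaling (in which one time derivative costs two spatial derivatives). Carrying out the index arithmetic, $W^{1,2+k}_2$ in one space plus one time dimension yields spatial regularity $C^{(k-1)/2}$ when $k$ is odd and $C^{k/2}$ when $k$ is even, which is exactly the value $\kappa$ in the statement. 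The continuity in the time variable up to $t=0$ is supplied by the $C\big([0,\delta);L^2\Ito\big)$ membership built into the definition of $\CT_\delta$, ensuring that the spatial trace of $u$ at $t=0$ is well defined and equals $g$ as an $L^2$ function.

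Finally I would combine these two facts: $u(t,\cdot)\to g$ in $L^2\Ito$ as $t\searrow 0$ by the continuity condition in $\CT_\delta$, while the embedding gives that $x\mapsto u(t,x)$ lies in $C^\kappa(I_0)$ uniformly for $t$ in a neighbourhood of $0$, with the trace $u(0,\cdot)$ inheriting membership in $C^\kappa(I_0)$. Since $L^2$ convergence identifies this trace with $g$ almost everywhere, and a $C^\kappa$ representative is unique, I conclude $g\in C^\kappa(I_0)$. Because $I_0$ was an arbitrary interval with $\overline{I_0}\subset\Ipl$, the conclusion holds for all such $I_0$.

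The main obstacle I anticipate is bookkeeping the index in the parabolic embedding correctly, in particular the odd/even split: the asymmetry arises because the embedding $W^{1,2+k}_2\hookrightarrow C^{\kappa}$ in the spatial variable only gains a full extra spatial derivative for every two extra weak derivatives, so the parity of $k$ determines whether the final fractional half-derivative can be promoted to an integer order of classical differentiability. I would need to invoke \cite[Theorem~2.2.6]{2008Krylov} with precisely matched exponents and verify that the resulting H\"older or $C^m$ exponent on a one-dimensional spatial slice is $\kappa$; the time-direction continuity and the passage to the $t=0$ trace are comparatively routine once the spatial embedding is pinned down.
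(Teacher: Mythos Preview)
Your argument has a genuine gap at the step where you pass from interior parabolic regularity to regularity of the trace at $t=0$. Lemma~\ref{auxiliary2} is an \emph{interior} regularity statement: from $u\in W^{1,2}_{2,\loc}\big((0,\delta)\times I_1\big)$ you obtain $u\in W^{1,2+k}_2(V)$ only for $V$ compactly contained in $(0,\delta)\times I_1$, and in particular for no $V$ that touches $\{t=0\}$. Your claimed choice ``$V$ containing a strip $(0,\delta/2)\times I_0$ up to closure'' is not available, since $0\notin J_1$. Consequently neither the trace theorem \cite[Theorem~2.2.6]{2008Krylov} nor the uniform-in-$t$ argument you sketch can be applied: the $C^\kappa(I_0)$ norms of $u(t,\cdot)$ are not shown to remain bounded as $t\searrow 0$, and for a genuinely forward parabolic problem with merely $L^2$ initial data they would typically blow up. Your outline, as written, would equally well ``prove'' that the ordinary heat equation on $I_0$ forces $C^\kappa$ initial data, which is false.

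The paper closes precisely this gap by a reflection trick. On $I_0\subset\subset\Ipl$ the coefficient $f$ is strictly positive, so the \emph{backward} problem $\partial_t u^d-\elle u^d=0$ on $(0,\delta)\times I_0$ with Dirichlet boundary and initial data $g$ is well posed. Setting $v(t,x)=u^d(-t,x)$ for $t<0$ and $v(t,x)=u(t,x)$ for $t\geq 0$ produces a function on $(-\delta,\delta)\times I_0$ satisfying $\partial_t v+\elle v=0$ on both sides; one then checks by hand that the distributional derivatives $\partial_t v,\partial_x v,\partial_{xx}v$ lie in $L^2$ across the seam $\{t=0\}$, so $v\in W^{1,2}_2\big((-\delta,\delta)\times I_0\big)$. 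Now $t=0$ is \emph{interior}, Lemma~\ref{auxiliary2} yields $v\in W^{1,2+k}_2$ on a neighbourhood of $\{0\}\times I_0$, and the trace theorem gives $g=v(0,\cdot)\in C^\kappa(I_0)$. The construction of $u^d$ and the verification that the glued $v$ lies in $W^{1,2}_2$ across $t=0$ are the missing ingredients in your proposal.
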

\begin{proof}
Let $J=(-\delta,\delta)$. The idea of the proof is to ``glue'' together
$u$ with a solution $u^d$ of a Dirichlet evolution problem associated to
$\Lper$ in $(-\delta,0)\times I_0$ and observe that the ``seam'' of this gluing
is $g(x)$. 

Let $u^d(t,x)$ for $(t,x)\in (0,\delta)\times I_0$ be a solution of the parabolic problem
\begin{equation} \label{dirichlet}
 \left\{\begin{aligned}
 &\partial_t u^d-\Lper u^d=0 & \forall (t,x)\in (0,\delta)\times I_0 \\
 &u^d(t,\min I_0)=u^d(t,\max I_0)=0 & \forall t\in (0,\delta) \\
 &u^d(0,x) =g(x)  & \forall x\in I_0.
 \end{aligned} \right.
\end{equation}
Since $f$ is positive definite in $I_0$, \eqref{dirichlet} has a unique (actually global in time) solution. By virtue of Lemma~\ref{auxiliary2}, 
\begin{equation} \label{regularity}
u^d,u\in W^{1,2+k}_2((0,\delta)\times I_0).
\end{equation}
 Let
\[
  v(t,x) =\left\{ \begin{aligned} &u^d(-t,x) & -\delta<t\leq 0,\, x\in I_0 \\
             & u(t,x) & 0\leq t <\delta,\, x\in I_0. 
             \end{aligned} \right.  
\]
Note that the change in the sign for $t$ ensures that in the region
$(-\delta,0)\times I_0$, $\partial v_t+\Lper v=0$.
By construction $v\in L^2(J\times I_0)$. We show that $v\in W^{1,2}_2(J\times I_0)$.

Let us verify firstly that $\partial_tv\in L^2(J\times I_0)$. Let
\[
  h(t,x) =\left\{ \begin{aligned} &-\partial_t u^d(-t,x) & -\delta<t< 0,\, x\in I_0 \\
             & \partial_t u(t,x) & 0< t <\delta,\, x\in I_0. 
             \end{aligned} \right.  
\]
By construction $h\in L^2(J\times I_0)$. We now show that $\partial_t v=h$.
For almost every $x\in I_0$,
\[
v(\cdot,x)\in C(J)\cap \left[ W^1_2(-\delta,0)\cup
W^1_2(0,\delta)\right].
\]
Then $\partial_tv(\cdot,x)=h(\cdot,x)\in L^2(J)$ for almost every $x\in I_0$. 
Thus
\[
    \int_J h(t,x)\phi(t) dt =-\int_J v(t,x) \partial_t\phi(t) dt
    \qquad \forall \phi\in C^{\infty}_c(J)
\] 
for almost every $x\in I_0$. Hence
\[
    \int_{I_0}\int_J h(t,x)\phi(t,x) dt \,dx =-\int_{I_0}\int_J v(t,x) \partial_t\phi(t,x) dt \,dx
    \qquad \forall \phi\in C^{\infty}_c(J\times I_0),
\]
so that $\partial_tv=h\in L^2(J\times I_0)$.

Now let us prove that $\partial_xv\in L^2(J\times I_0)$. Let
\[
  k(t,x) =\left\{ \begin{aligned} &\partial_x u^d(-t,x) & -\delta<t< 0,\, x\in I_0 \\
             & \partial_x u(t,x) & 0< t <\delta,\, x\in I_0. 
             \end{aligned} \right.  
\]
Then $k\in L^2(J\times I_0)$. Let us show that $\partial_xu=k$. Note that
$u^d(-t,\cdot),u(t,\cdot)\in C^2(I_0)$ for all $t\in(0,\delta)$ as
a consequence of \eqref{regularity}.  Hence,
\[
    \int_{I_0} k(t,x)\phi(x) dx =-\int_{I_0} v(t,x) \partial_x\phi(x) dx
    \qquad \forall \phi\in C^{\infty}_c(I_0)
\]
for all $t\in J\setminus \{0\}$. Then 
\[
    \int_{I_0}\int_J k(t,x)\phi(t,x) dt \,dx =-\int_{I_0}\int_J v(t,x) \partial_x\phi(t,x) dt \,dx
    \qquad \forall \phi\in C^{\infty}_c(J\times I_0).
\]
Hence $\partial_xv=k\in L^2(J\times I_0)$. 

The proof that $\partial_{xx}v\in  L^2(J\times I_0)$ is similar. Note that here is crucial that $k\geq 1$. Hence $v\in W^{1,2}_2(J\times I_0)$ as needed.

We complete the proof of the theorem as follows. By Lemma~\ref{auxiliary2},
$v\in W^{1,2+k}_2(J\times I_0)$. Since $g(x)=v(0,x)$, taking $r=1$ and
the ``$k$'' of the theorem as ``$2+k$'' in \cite[Theorem~2.2.6]{2008Krylov},
we get for $\kappa<\frac{1+k}{2}$ (which is equivalent to $\frac{2\kappa+1}{2(2+k)}+\frac12<1$ in the mentioned theorem) that $g\in C^{\kappa}(I_0)$.
\end{proof}

One relevant question in the context of this theorem is the existence of solution of \eqref{B} (for $T$ sufficiently small), if $\mathrm{supp} (g)\subset J\subset \overline{J} \subset \Imi$. A positive answer would certainly be of interest. In this respect the ``barrier'' at $x=0$ may prevent this solution 
propagating from $\Imi$ to $\Ipl$ and make it lose its regularity.


\section{Basis properties of the eigenfunctions of $\Lper$}

With Theorem~\ref{singularity_I+} at hand and the ``diagonalisation'' lemma below, we can establish properties of the set of eigenfunctions of $\Lper$. First, however, we deal with one piece of unfinished business from \cite{2010BLM}. A proof of the second statement in the following theorem by different methods can be found in \cite[Proposition~5.5]{2009Chugunovaetal} for the
case $f(x)=\sin(x)$.

\begin{thm} \label{infinitely_many} 
Suppose that $f$ satisfies conditions \ref{c1}-\ref{c4} from Section~\ref{section1}. The operator $\Lper$ has infinitely many eigenvalues. Moreover, all these eigenvalues are algebraically simple. Consequently, the eigenspaces of $\Lper$ contain only eigenfunctions
and no associated functions.
\end{thm}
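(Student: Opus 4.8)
The plan is to reduce the eigenvalue problem to the zeros of a single entire characteristic function and then to read off both assertions from the structure of that function. For $\nu\neq0$ an eigenfunction of $\Lper$ necessarily lies in $\DomL\ominus\spa\{1\}$, and by the representation \eqref{rep-3} together with Lemma~\ref{five} it satisfies $(I-\nu T)u=\kappa\cdot 1$ for the scalar $\kappa=-\tfrac{\nu}{2\pi}\langle Tu,1\rangle$. Now $T$ is block diagonal with respect to $L^2\Ipl\oplus L^2\Imi$ — this is the decomposition $T=T^++T^-$ already used in the proof of Theorem~\ref{schatten} — and on each block it is a Volterra operator with bounded kernel (by \eqref{asym-1}), hence quasinilpotent. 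Thus $I-\nu T$ is invertible for every $\nu\in\Cbb$, so $u=\kappa(I-\nu T)^{-1}1$, and the constraint $u\perp 1$ becomes the single scalar equation
\be
\Phi(\nu):=\langle(I-\nu T)^{-1}1,1\rangle=\sum_{n=0}^\infty\langle T^n1,1\rangle\,\nu^n=0 .
\ee
Quasinilpotence makes the Neumann series converge on all of $\Cbb$, so $\Phi$ is entire; conversely every nonzero zero of $\Phi$ yields the eigenfunction $(I-\nu T)^{-1}1$. Hence the nonzero eigenvalues of $\Lper$ are exactly the nonzero zeros of $\Phi$.

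I would prove infinitude first. The reflection $(Ru)(x)=u(-x)$ obeys $RTR=-T$, since \eqref{green_ker} and the oddness of $f$ give $G(-x,-y)=-G(x,y)$; thus $T$ interchanges even and odd functions, and as $1$ is even the coefficients $\langle T^n1,1\rangle$ vanish for odd $n$, so $\Phi(\nu)=\Psi(\nu^2)$. A direct computation from \eqref{intfact} gives $w'=p/(\varepsilon f^2)$, so $w=p/f$ is strictly monotone on each of $\Ipl$ and $\Imi$; consequently the kernel of $T$ is positive on $\Ipl$ and negative on $\Imi$, whence $T^{2m}1$ is positive on each half-interval and $\langle T^{2m}1,1\rangle>0$ for every $m$. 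Therefore $\Psi$ is not a polynomial. On the other hand Theorem~\ref{schatten} gives $T\in\sch_p$ for some $p<1$, so $\Phi$ is entire of order $<1$; an entire function of order $<1$ with only finitely many zeros is a polynomial. Hence $\Psi$, and therefore $\Phi$, has infinitely many zeros, all nonzero since $\Phi(0)=2\pi$, and $\Lper$ has infinitely many eigenvalues.

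Geometric simplicity is then immediate: for $\nu\neq0$ every eigenfunction is a multiple of $(I-\nu T)^{-1}1$, while $\nu=0$ has the one-dimensional eigenspace $\spa\{1\}$ with $0\notin\spec(\Lper\upharpoonright_{\tilde{D}})$. For algebraic simplicity I would attempt to solve $(\Lper-\nu_0)v=\varphi$ with $v\in\DomL$, where $\varphi=(I-\nu_0T)^{-1}1$; the same manipulation as above shows that such an associated function exists if and only if $\langle(T-\tfrac1{\nu_0})^{-2}1,1\rangle=0$, that is, if and only if $\Phi'(\nu_0)=0$. A short computation identifies $\Phi'(\nu_0)$, up to a nonzero factor, with the biorthogonal pairing $\langle\varphi,\varphi^\ast\rangle$, where $\varphi^\ast$ is the eigenfunction of the adjoint $\Lper^\ast[v]=\varepsilon(fv')'-v'$ at $\overline{\nu_0}=-\nu_0$. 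So algebraic simplicity reduces to showing that this pairing does not vanish.

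This non-degeneracy is the step I expect to be hardest. The plan is to use the symmetry $\Lper^\ast=-\tau^{-1}\Lper\tau$ with $(\tau u)(x)=u(x+\pi)$, valid because $f(x+\pi)=-f(x)$; by geometric simplicity it forces $\varphi^\ast(x)$ to be proportional to $\varphi(x+\pi)$, so that $\langle\varphi,\varphi^\ast\rangle$ is a nonzero multiple of $\int_{-\pi}^{\pi}\varphi(x)\overline{\varphi(x+\pi)}\,dx$. Combining this with the $\mathcal{PT}$-type identity $\varphi(-x)=\gamma\,\overline{\varphi(x)}$ coming from $R\Lper R=-\Lper$, and with the boundary behaviour of $\varphi=(I-\nu_0T)^{-1}1$ near $0,\pm\pi$ supplied by \eqref{asym-1}, I would show that the integral cannot be zero. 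The delicate point — and the one I expect to cost the most work — is the quantitative control of the eigenfunction near the singular points needed to rule out an accidental cancellation, since here neither self-adjointness nor positivity is available to force the pairing to be nonzero.
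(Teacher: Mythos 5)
Your argument for the infinitude of the spectrum is a genuinely different route from the paper's and, as far as I can see, it is sound. The paper works with the function $\rho(z)=\phi(\pi;-iz^2)/\phi(\pi;iz^2)$ imported from \cite{2010BLM}, whose infinite-product representation over the real sequence $(\alpha_n)$ shows that $\rho(1/z)$ has an essential singularity at the origin; Picard's theorem together with the strict monotonicity of $\arg\rho$ along the rays $\arg z\equiv\pi/4$ then yields infinitely many solutions of $\rho(z)=1$. You instead stay entirely inside the framework of Sections~2--3: the reduction of the eigenvalue problem to the zeros of $\Phi(\nu)=\langle(I-\nu T)^{-1}1,1\rangle$ is correct (an eigenfunction for $\nu\neq0$ is automatically orthogonal to $1$ because the range of $\Lper$ is contained in $\spa\{1\}^{\perp}$), the block-Volterra structure of $T$ does make it quasinilpotent so that $\Phi$ is entire, the sign pattern of $G$ on $\Omega^{\pm}$ does give $\langle T^{2m}1,1\rangle>0$, and the antisymmetry $G(-x,-y)=-G(x,y)$ kills the odd coefficients. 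The one step you assert without justification is that $T\in\sch_p$ with $p<1$ forces $\Phi$ to have order $<1$: for this you need $\det(I-\nu T)\equiv1$ (Lidskii plus quasinilpotence) and the Gohberg--Krein growth estimate for the first Fredholm minor of a $\sch_p$ operator with $p\le1$; both are citable from \cite{1969Gohbergetal}, so this is a presentational rather than a mathematical gap. What your approach buys is independence from the machinery of \cite{2010BLM}; what it costs becomes clear in the second half.

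The algebraic simplicity claim is where the proposal has a genuine gap. Your reduction is correct: an associated function at $\nu_0$ exists if and only if $\langle(I-\nu_0T)^{-2}1,1\rangle=0$, i.e.\ if and only if $\Phi'(\nu_0)=0$, so simplicity is equivalent to all zeros of $\Phi$ being simple. But nothing you have established rules out a multiple zero: positivity of the Taylor coefficients of $\Psi$ is perfectly compatible with a double zero on the negative real axis (compare $(1+s)^2$), and your proposed non-degeneracy argument --- identifying $\Phi'(\nu_0)$ with the pairing $\int_{-\pi}^{\pi}\varphi(x)\overline{\varphi(x+\pi)}\,dx$ and excluding cancellation via the boundary asymptotics --- is, as you yourself acknowledge, only a plan with the decisive estimate missing. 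This is exactly the point at which the paper's route pays off: on the rays $\arg z\equiv\pi/4$ one has $|\rho(z)|=1$, and the phase satisfies $\Theta'(r)=-4r\sum_{n}\alpha_n^{-2}(1+r^4/\alpha_n^4)^{-1}$, which vanishes only at $r=0$ precisely because the $\alpha_n$ are real and positive --- a hidden positivity inherited from the self-adjoint structure underlying \cite{2010BLM}, and one that your function $\Phi$ does not visibly possess. To complete your argument you would either have to prove the non-vanishing of that pairing (which I expect is essentially as hard as re-deriving the paper's phase-monotonicity statement) or connect $\Phi$ to $\rho$ and borrow that input; as it stands, the second and third assertions of the theorem are not proved.
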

\begin{proof} Following the notation in \cite{2010BLM}, we introduce the meromorphic function
\begin{equation}    \label{pro_rho}
\rho(z) = \frac{\phi(\pi;-iz^2)}{\phi(\pi;iz^2)} = \frac{\Pi_{n=1}^\infty\left(1-\frac{z^2}{\alpha_n^2}\right) }{ \Pi_{n=1}^\infty\left(1+\frac{z^2}{\alpha_n^2}\right) }= \prod_{n=1}^\infty  \frac{1-\frac{z^2}{\alpha_n^2}}{1+\frac{z^2}{\alpha_n^2}}.
\end{equation}
Here $\phi(x;\lambda)$ is the unique solution of the differential equation $i\ell_\epsilon \phi =\lambda \phi$ satisfying $\phi(0;\lambda) = 1$. The sequence $(\alpha_n)$ consists entirely of positive numbers and its asymptotic behaviour is $\alpha_n=O(n)$ as $n\to\infty$, so it grows sufficiently rapidly to ensure the convergence of the products on the right hand side. It was proved in \cite{2010BLM} that $\lambda$ is an eigenvalue of $iL_\epsilon$ if and only if $\lambda = -iz^2$ for $z\in\Cc$ such that $\rho(z)=1$.
Moreover it was shown that $|\rho(z)|=1$ on, and only on, the lines $\arg(z) \equiv \pi/4$ (modulo $\pi/2$). 

For the first part of the theorem it suffices to show that there
exist infinitely many $r\in\Rr$ such that $\rho(r\exp(i\pi/4)) = 1$. 
To this end we put $\rho(r\exp(i\pi/4)) = \mu(r)=\exp(i\Theta(r))$ which maps $\Rr$ smoothly into the unit circle, $\mathbb{T}$. Here the function $\Theta(r)$ is chosen by picking a branch of the logarithm in the expression
\begin{align*} 
i\Theta(r) &= \log(\mu(r))=\sum_{n=1}^\infty\left\{ \log\left(1-i\frac{r^2}{\alpha_n^2}\right) - \log\left(1+i\frac{r^2}{\alpha_n^2}\right)\right\}  \\
&= 2i\sum_{n=1}^\infty \arg \left(1-i\frac{r^2}{\alpha_n^2}\right) . 
\end{align*}
For convenience we choose the branch $[-\pi,\pi)$ and observe that all summations in the above converge as a consequence of the convergence of \eqref{pro_rho}. We now show that $\rho$ passes through any point of $\mathbb{T}$ (including the needed value 1) an infinite number of times as $|r|$ increases. 

The meromorphic function $\rho(1/z)$ has an essential singularity at the origin.
Therefore, in any neighbourhood of $0$, $\rho(1/z)$ assumes each complex number, with the possible exception of one, infinitely many times.
Since $|\rho(z)|=1$ only on the rays $\arg(z) \equiv \pi/4$ (modulo $\pi/2$) and
$\rho(\overline{z})=\overline{\rho(z)}$, necessarily 
$\mu(r)$ assumes any value of $\mathbb{T}$, with the possible exception of one, infinitely many times for $r\in \Rr$. Now
\[
     \partial_r \mu(r)=i\Theta'(r) \exp\left(i \Theta(r)\right).
\]
Here $\Theta'(r)$ is defined as the derivative from the ``right'', if $r$ is such that $\Theta(r)=-\pi$.
Differentiation yields 
\[ 
\Theta'(r) = -4r\sum_{n=1}^\infty \alpha_n^{-2}\frac{1}{1+ \frac{r^4}{\alpha_n^4}}, 
\]
where the summation is convergent, once again, due to the growth of $\alpha_n$ at infinity. 
This shows that $\partial_r \mu(r)$  only vanishes at $r=0$. Hence, 
there are no exceptions in the values that $\mu(r)$ attains on $\mathbb{T}$ an infinite number of times, which means that $\rho(z)=1$ infinitely many times on the ray $z=r\exp(i\pi/4)$.

In order to achieve the second part of the theorem we follow Kato \cite[Chapter III, \S 5]{Kato}. It suffices to show that the resolvent $(\Lper-\lambda)^{-1}$ has only algebraically simple poles. For this we employ an expression for the Green's 
function found in \cite[\S3]{2010BLM2}.  We require, in addition to the solution $\phi(x;\lambda)$, 
a second solution $\psi(x;\lambda)$ which is analytic in $\lambda$ for 
$x\neq 0$, satisfies $\psi(\pm \pi;\lambda)=0$, and has the Wronskian property
\[ \left| \begin{array}{cc} \phi(x;\lambda) & \psi(x;\lambda) \\
                                     p(x)\phi'(x;\lambda) & p(x)\psi'(x;\lambda) \end{array}\right| \equiv 1. \]
Here $p$ is the coefficient introduced in (\ref{intfact}). According to \cite[(3.1)-(3.6)]{2010BLM2}, for any $F\in L^2(-\pi,\pi)$, $u=(\Lper-\lambda)^{-1}F$ is given by
\begin{align*}    u(x;\lambda) &=   \phi(x;\lambda)\int_{x}^{\pi}\psi(t;\lambda)\frac{p(t)}{\epsilon f(t)}F(t)dt
  + \psi(x;\lambda) \int_0^x \phi(t;\lambda) \frac{p(t)}{\epsilon f(t)}F(t)dt \\
 & \qquad  + \hspace{5mm}
\frac{\phi(x;\lambda)\phi(-\pi;\lambda)}{\phi(\pi;\lambda)-\phi(-\pi;\lambda)} {\displaystyle \int_{-\pi}^{\pi} \psi(t;\lambda) 
   \frac{p(t)}{\epsilon f(t)}F(t)dt }. 
\end{align*}
The only singularities in this expression come from the zeros of the denominator, which in turn are the zeros of  
\[
    1-\frac{\phi(-\pi;\lambda)}{\phi(\pi;\lambda)}=1-\rho(z).
\]
Indeed, recall from \cite{2010BLM} that $\phi(-\pi;\lambda)=\phi(\pi;-\lambda)$. Moreover, multiplicities coincide except at $z=0$, where a double root in terms of $z$ is a simple root in terms of $\lambda$. 

Apart from the double root at $z=0$,  all the roots of $\rho(z)=1$ are simple. They lie on the lines $\arg(z)\equiv \pi/4$ (modulo $\pi/2$) where $|\rho(z)|=1$, and on these lines the phase of $\rho$ has been shown to be non-stationary, except at $z=0$.
 Thus  $(\Lper-\lambda)^{-1}$ has only simple poles as needed.
\end{proof}

In the sequel we follows \cite[Section~3.3-3.4]{2007Davies} for the notions of conditional and unconditional basis.
Recall that if $\{\phi_n\}$ is a conditional basis
on a Hilbert space, there exists  a dual set $\{\phi_n^*\}$ such that
$\{\phi_n,\phi_n^*\}$ is  bi-orthogonal in the sense that
$\langle \phi_n,\phi_m^*\rangle=\delta_{nm}$ and for all $g\in \mathcal{H}$,
\[
    \|g-\sum_{n=1}^k \widehat{g}(n)\phi_n\|\to 0 \qquad k\to \infty
\] 
where the ``generalised Fourier coefficients'' $\widehat{g}(n)=\langle g,\phi_n^*\rangle$.

\begin{lem} \label{diagonalisation}
Let $M:\dom(M)\longrightarrow \mathcal{H}$ be a closed operator on the  infinite-dimensional Hilbert space $\mathcal{H}$. Assume that the resolvent of $M$ is compact and denote by
$\mu_n\in \Cc$ the eigenvalues of $M$ with corresponding eigenfunctions $\phi_n\not=0$,
$M\phi_n=\mu_n\phi_n$. If $\{\phi_n\}_{n=1}^\infty$ is a conditional basis,
then $\dom(M)=\mathcal{D}$ where
\begin{equation}    \label{domai_compact}
    \mathcal{D}=\{g\in \mathcal{H}:\sum_{n=1}^k \mu_n\widehat{g}(n)\phi_n \to h\in \mathcal{H}\}
\end{equation}
and $Mg=h$ for $g\in \mathcal{D}$.
\end{lem}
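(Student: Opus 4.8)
The plan is to show the two-way inclusion $\dom(M) = \mathcal{D}$ together with the identity $Mg = h$ on this common domain. The underlying mechanism is the bi-orthogonal expansion afforded by the conditional basis $\{\phi_n\}$ with its dual system $\{\phi_n^*\}$, combined with the fact that the $\phi_n$ are eigenfunctions and that the resolvent of $M$ is compact. First I would fix a reference point $\zeta$ in the resolvent set of $M$, which is nonempty because the resolvent is compact; by shifting $M \mapsto M - \zeta$ we may assume without loss of generality that $0 \in \rho(M)$, so that $M^{-1}$ exists and is compact. Under this normalisation each eigenvalue $\mu_n$ is nonzero and $M^{-1}\phi_n = \mu_n^{-1}\phi_n$.

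For the inclusion $\mathcal{D} \subseteq \dom(M)$, take $g \in \mathcal{D}$, so that the partial sums $s_k := \sum_{n=1}^k \mu_n \widehat{g}(n)\phi_n$ converge to some $h \in \mathcal{H}$. The key observation is that applying the bounded operator $M^{-1}$ termwise gives
\be
M^{-1} s_k = \sum_{n=1}^k \mu_n \widehat{g}(n)\, M^{-1}\phi_n = \sum_{n=1}^k \widehat{g}(n)\phi_n.
\ee
Since $\{\phi_n\}$ is a conditional basis, the right-hand side converges to $g$ as $k \to \infty$; on the other hand, by continuity of $M^{-1}$, the left-hand side converges to $M^{-1}h$. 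Hence $g = M^{-1}h$, which shows $g \in \dom(M)$ and $Mg = h$. For the reverse inclusion $\dom(M) \subseteq \mathcal{D}$, take $g \in \dom(M)$ and set $h := Mg$. Then $g = M^{-1}h$, and expanding $h$ in the conditional basis, $h = \lim_k \sum_{n=1}^k \widehat{h}(n)\phi_n$, I would again apply the bounded operator $M^{-1}$ termwise to obtain
\be
M^{-1}\Big(\sum_{n=1}^k \widehat{h}(n)\phi_n\Big) = \sum_{n=1}^k \widehat{h}(n)\,\mu_n^{-1}\phi_n \longrightarrow M^{-1}h = g.
\ee
It remains to identify the scalar coefficients $\widehat{h}(n)\mu_n^{-1}$ with $\widehat{g}(n)$, so that the partial sums $\sum_{n=1}^k \mu_n \widehat{g}(n)\phi_n = \sum_{n=1}^k \widehat{h}(n)\phi_n$ converge to $h$, placing $g$ in $\mathcal{D}$. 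This identification follows from bi-orthogonality: pairing $g = \sum_n \widehat{h}(n)\mu_n^{-1}\phi_n$ against $\phi_m^*$ and using $\langle \phi_n,\phi_m^*\rangle = \delta_{nm}$ gives $\widehat{g}(m) = \widehat{h}(m)\mu_m^{-1}$, equivalently $\widehat{h}(m) = \mu_m \widehat{g}(m)$.

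The step I expect to require the most care is the termwise application of $M^{-1}$ in the reverse inclusion, specifically justifying that $\widehat{h}(n) = \mu_n\widehat{g}(n)$ interacts correctly with the \emph{conditional} (rather than unconditional) convergence of the expansions. Because the basis is only conditional, the order of summation matters and one cannot rearrange freely; the argument must keep the partial sums in their fixed order throughout, which the computation above does. The use of the dual functionals $\phi_m^*$ to extract coefficients is legitimate since they are bounded, so passing a convergent series through $\langle\,\cdot\,,\phi_m^*\rangle$ commutes with the limit. Finally, undoing the shift $M \mapsto M - \zeta$ at the end only alters the eigenvalues by the additive constant $\zeta$ and leaves both $\dom(M)$ and the eigenfunctions unchanged, and the definition of $\mathcal{D}$ transforms consistently since $\widehat{g}(n)$ and $\phi_n$ are unaffected; hence the identity $\dom(M) = \mathcal{D}$ with $Mg = h$ holds for the original operator.
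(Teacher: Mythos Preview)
Your proof is correct and follows essentially the same route as the paper's: both directions hinge on the bounded resolvent together with the bi-orthogonal expansion. The only cosmetic differences are that the paper works with a general $\mu\in\rho(M)$ rather than shifting to $0\in\rho(M)$, invokes closedness of $M$ directly for the inclusion $\mathcal{D}\subseteq\dom(M)$ (where you use continuity of $M^{-1}$, which is equivalent), and for the reverse inclusion computes $\widehat{g}(n)$ via the identity $(M^*-\overline{\mu})^{-1}\phi_n^*=(\overline{\mu_n}-\overline{\mu})^{-1}\phi_n^*$ rather than by applying $M^{-1}$ to the expansion of $h$ and reading off coefficients.
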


\begin{proof}
Let $g\in \mathcal{D}$ and $g_k=\sum_{n=1}^k \widehat{g}(n)\phi_n$. Then
$g_k\in \dom(M)$, $g_k\to g$ and $M(g_k-g_j)\to 0$ as $j,k\to \infty$. Since $M$ is closed, then $g\in \dom(M)$ and $Mg=\lim_{k\to \infty} Mg_k$.

Conversely, let $g\in \dom(M)$. Then $g=(M-\mu)^{-1}z$ for a suitable
$\mu\not=\mu_n$ and $z\in \mathcal{H}$. Note that the spectrum of $(M-\mu)^{-1}$
is $\{\frac{1}{\mu_n-\mu}\}\cup\{0\}$ and $(M-\mu)^{-1}\phi_n=\frac{1}{\mu_n-\mu}\phi_n$.  
Also 
\[
\langle (M^*-\overline{\mu})^{-1}\phi^*_n,\phi_m\rangle=
\langle \phi^*_n,(M-\mu)^{-1}\phi_m\rangle = \frac{\delta_{nm}}{\overline{\mu_n}-\overline{\mu}},
\]
so $(M^*-\overline{\mu})^{-1}\phi_n^*=\frac{1}{\overline{\mu_n}-\overline{\mu}} \phi_n^*$.
Hence
\[
    \widehat{g}(n)= \langle g,\phi^*_n\rangle = \langle (M-\mu)^{-1}z,\phi^*_n\rangle 
    = \frac{\widehat{z}(n)}{\mu_n-\mu}. 
\]
Thus $(\mu_n-\mu)\widehat{g}(n)=\widehat{z}(n)$. Since both 
$\|z-\sum_{n=1}^k\widehat{z}(n)\phi_n\|\to 0$ and $\|\mu g-\mu g_k\|\to 0$, then
$\sum_{n=1}^k\mu_n \widehat{g}(n)\phi_n \to h$ for some $h \in \mathcal{H}$. 
Moreover $h=z+\mu g=Mg$. 
\end{proof}

Assume now that $\{\phi_n\}$ is an unconditional basis of $\mathcal{H}$.
Then the norm generated by $\{\phi_n^*\}$ is equivalent to the norm of $\mathcal{H}$. That is, there are positive constants
$0<a\leq b<\infty$, such that
\begin{equation} \label{equivalent_norms}
     a\|g\|^2 \leq \sum |\hat{g}(n)|^2 \leq b\|g\|^2 \qquad \qquad \forall g\in \mathcal{H}.
\end{equation}
See for example \cite[Theorem~3.4.5]{2007Davies}.
Consider the case $M=\Lper$ and fix the eigenfunctions such that $\Lper \phi_n=i\lambda_n \phi_n$ for $\lambda_n\in \Rr$ the corresponding rotated eigenvalues. 

The following statement is a consequence of Theorem~\ref{singularity_I+}. The hypothesis on the degree of smoothness of $f$ is only present here in order to invoke the latter. Most likely the conclusion will also hold true for less regular $f$, but the treatment of this case will require an analysis beyond the scope of the present paper.

\begin{thm} \label{non-unconditional-basis}  
If $f\in C^7(\Rr)$, the eigenfunctions of $\Lper$ do not form an unconditional basis.
\end{thm}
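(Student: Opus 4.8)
The plan is to argue by contradiction. Suppose the eigenfunctions $\{\phi_n\}$, normalised so that $\Lper\phi_n=i\lambda_n\phi_n$ with $\lambda_n\in\Rr$, do form an unconditional basis. Because the eigenvalues are purely imaginary, the factors $e^{-i\lambda_n t}$ have modulus one, so the norm equivalence \eqref{equivalent_norms} should let me define a family of operators
\[
U(t)g=\sum_{n=1}^\infty e^{-i\lambda_n t}\,\widehat{g}(n)\,\phi_n,\qquad t\in\Rr,
\]
uniformly bounded in $t$ on $L^2\Ito$, with $\sqrt{a/b}\,\|g\|\le\|U(t)g\|\le\sqrt{b/a}\,\|g\|$. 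A dominated-convergence argument applied to the coefficient sequence (dominated by $2|\widehat{g}(n)|\in\ell^2$) shows that $t\mapsto U(t)g$ is continuous into $L^2\Ito$, so $U(\cdot)g\in C([0,\delta);L^2\Ito)$ for every $\delta>0$. The aim is to feed these operators initial data for which $U(\cdot)g$ is a genuine solution of \eqref{B} but $g$ fails to have the regularity forced by Theorem~\ref{singularity_I+}.

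I would take the initial datum $g$ in $\domlper$. By Lemma~\ref{diagonalisation} this means precisely that $\sum_n\lambda_n\widehat{g}(n)\phi_n$ converges; since $|e^{-i\lambda_n t}|=1$, the unconditional basis property then gives $U(t)g\in\domlper$ for all $t$, together with $\Lper U(t)g=U(t)\Lper g$. Differentiating the series term by term yields $\partial_t U(t)g=-\Lper U(t)g$ in $L^2\Ito$, continuously in $t$. Writing $u(t,x)=U(t)g(x)$ I thus obtain $u(t,\cdot)\in\domlper$, $\partial_tu+\Lper u=0$ and $u(0,\cdot)=g$, i.e. all the algebraic conditions in \eqref{B}.

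It then remains to verify the local parabolic regularity $u\in W^{1,2}_{2,\loc}\big((0,\delta)\times[\Imi\cup\Ipl]\big)$, so that $u\in\CT_\delta$. Fix $I_0\subset\overline{I_0}\subset\Ipl$ and $J\subset\subset(0,\delta)$. On $I_0$ the coefficient $\varepsilon f$ is bounded below, so $\elle$ is uniformly elliptic there, and the interior elliptic estimate gives $\|u(t,\cdot)\|_{H^2(I_0)}\le C\big(\|\Lper u(t,\cdot)\|_{L^2}+\|u(t,\cdot)\|_{L^2}\big)=C\big(\|\partial_tu(t,\cdot)\|_{L^2}+\|u(t,\cdot)\|_{L^2}\big)$. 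Since $t\mapsto u(t,\cdot)$ and $t\mapsto\partial_tu(t,\cdot)$ are continuous into $L^2\Ito$, integrating this estimate over $J$ shows $\partial_{xx}u,\partial_xu,u\in L^2(J\times I_0)$, while $\partial_tu=-\Lper u\in L^2(J\times I_0)$ directly. Hence $u\in W^{1,2}_2(J\times I_0)$; the region in $\Imi$ is treated identically, giving $u\in\CT_\delta$ and therefore a bona fide solution of \eqref{B}.

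With the solution in hand, Theorem~\ref{singularity_I+} applies: taking $k=7$ (so $\kappa=3$) forces $g\in C^3(I_0)$ for every $I_0\subset\subset\Ipl$. This is the desired contradiction, because $\domlper$ contains functions far rougher than $C^3$ on such intervals. Indeed, by the representation \eqref{rep-3} an element of $\domlper$ satisfies $u'=-\psi'\int_0^x F/\psi$ with $\psi$ smooth and positive on $\Ipl$, so $u\in C^{1,1/2}$ but $u''=-\psi''\int_0^xF/\psi-\psi'F/\psi\in L^2\setminus C^0$ whenever $F=\Lper u\in L^2\Ito$ is chosen discontinuous; such a $g$ lies in $\domlper$ yet is not even $C^2(I_0)$. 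The main obstacle is precisely this regularity step: for a general $g\in L^2\Ito$ the spectral series need not converge in $W^{1,2}_{2,\loc}$ (the eigenfunctions have $H^2$ norms growing like $\lambda_n$ while the coefficients $\widehat g(n)$ only lie in $\ell^2$), which is why one must restrict the initial datum to $\domlper$ and rely on the interior elliptic estimate rather than on termwise convergence.
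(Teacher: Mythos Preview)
Your argument is correct and follows the same contradiction strategy as the paper: assume the eigenfunctions form an unconditional basis, use the unimodularity of $e^{-i\lambda_n t}$ together with \eqref{equivalent_norms} and Lemma~\ref{diagonalisation} to build a solution $u\in\CT_\delta$ of \eqref{B} with initial datum $g\in\domlper$, and then contradict Theorem~\ref{singularity_I+} by choosing $g$ outside $C^3$ on a compact subinterval of $\Ipl$.

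The one genuine difference is the regularity step. The paper obtains $\partial_x u,\partial_{xx}u\in L^2_{\loc}$ by a parabolic energy estimate: it multiplies the identity $(\partial_t+\elle)(\zeta v)=v(\partial_t+\elle)\zeta+2\varepsilon f\zeta' v'$ by $\zeta\overline{v}$, integrates by parts, and absorbs the cross term to get a bound of the form \eqref{est}, applied to differences $v=u_k-u_j$. You instead fix $t$ and invoke the interior elliptic estimate for $\elle$ on $I_0$ (valid because $\varepsilon f$ is bounded below there), obtaining $\|u(t,\cdot)\|_{H^2(I_0)}\le C(\|\Lper u(t,\cdot)\|_{L^2}+\|u(t,\cdot)\|_{L^2})$ and then integrating in $t$. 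Your route is shorter and avoids the energy computation; the paper's route has the mild advantage of working directly with the Cauchy sequences $\{u_k\}$, so that the identification of the slicewise $x$-derivatives with the distributional $(t,x)$-derivatives is automatic. A second cosmetic difference is that the paper exhibits an explicit $h\in\domlper\setminus H^3$, whereas you argue abstractly from \eqref{rep-3} that a discontinuous $F=\Lper u$ forces $u''\notin C^0$; both are adequate.
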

\begin{proof} Assume, for a contradiction, that $\{\phi_n\}$ is an unconditional basis.
Let 
\[                                                                          
    h(x)=\left\{ \begin{array}{ll}                                           
        4x/\pi                   & 0<x<\pi/4 \\                              
     -8x^2/\pi^2+8x/\pi-1/2       & \pi/4<x<3\pi/4 \\                        
     -4(x-\pi)/\pi & 3\pi/4<x<\pi \\
      \text{extend to an odd function} & -\pi<x<0  .                                        
   \end{array} \right.                                                      
\] 
Then $h'\in  \AC\Ito$, $h\not \in H^3\Ito$ and $h\in \domlper$
(by virtue of the characterisation of $\domlper$ given in Section~\ref{section2}). For $k\in \Nn$, let
\begin{equation*} \label{fake_solution}
      u_k(t,x)=\sum_{n=-k}^k e^{-i \lambda_n t} \widehat{h}(n) \phi_n(x).
\end{equation*}
The fact that $f \in C^7(\Rr)$ together with a bootstrap argument 
yield $\partial_x^8\phi_n \in \ACl((-\pi,0)\cup(0,\pi))$ for all $n\in \mathbb{N}$.
Hence $u_k \in \CT_T$ for all $T>0$. Moreover, $u_k$ is a solution of \eqref{B} with $g=h_k=\sum_{n=-k}^k  \widehat{h}(n) \phi_n$.
Below we show that the limit $u:=\lim_{k\to\infty} u_k$ exists in a suitable sense,  $u\in \CT_T$ for all $T>0$ and $u$ is a solution of \eqref{B} with $g=h$. This would immediately complete the proof,
as Theorem \ref{singularity_I+} then implies the contradictory statement $h\in C^3(-\pi,\pi)$.

Since $h \in L^2(-\pi,\pi)$,
\begin{equation*} \label{l2}
\sum_{n=-\infty}^\infty |\widehat{h}(n)|^2 < \infty.
\end{equation*}
By virtue of  \eqref{equivalent_norms}, it follows that $\{u_k(t,\cdot)\}_k$ is a Cauchy sequence in $L^2 \Ito$ for all $t \in \Rr$.
Then $\{u_k(t,\cdot)\}_k$ converge in $L^2\Ito$ to a limit, $u(t,\cdot)$. This convergence is uniform in $t$, so 
$\{u_k\}_k$ converges in $L^2([0,T]\times \Ito)$ to $u$
 and $u \in C([0,T),L^2 \Ito)$ for all $T>0$. We show that $u\in W^{1,2}_{2,\loc}\big((0,T)\times [\Imi\cup\Ipl]$ in three further steps.

According to our assumption, the representation of $\dom(\Lper)$ given by \eqref{domai_compact} holds true. Then
\begin{equation} \label{dtl2}
\sum_{n=-\infty}^\infty |\mu_n\widehat{h}(n)|^2 < \infty, \qquad \mu_n = i\lambda_n.
\end{equation}
From \eqref{equivalent_norms} and \eqref{dtl2} it follows that $\{\partial_tu_k(t,\cdot)\}_k$ is a Cauchy sequence in $L^2 \Ito$ for all $t \in \Rr$. Then $\{\partial_tu_k\}_k$ converges in $L^2([0,T]\times \Ito)$ to the distributional derivative $\partial_tu$, so that $\partial_tu \in L^2([0,T]\times\Ito)$.

Let us now show that $\partial_xu$ is in $L^2_{\loc}\big((0,T)\times [\Imi\cup\Ipl]\big)$.  
To this end we use the differential equation in \eqref{B}.  Let $\zeta \colon \Rr \times \Rr \to \Rr$ be a smooth function whose support is contained in $(0,T)\times [\Imi\cup\Ipl]$. For any $v \in \CT_T$ satisfying $(\partial_t + L_\varepsilon)(v) = 0$, \begin{equation} \label{cutoff}
(\partial_t + \ell_\varepsilon)(\zeta v) = v(\partial_t + \ell_\varepsilon)(\zeta) + 2\varepsilon f \zeta'v'.
\end{equation}
Multiplying \eqref{cutoff} by $\zeta \overline{v}=\overline{\zeta v}$ and adding the result to its complex conjugate gives
\begin{align*}
\varepsilon(f(\zeta v)')'\zeta \overline{v} + \varepsilon(f(\zeta\overline{v})')'{\zeta v} & = 
2\zeta|v|^2(\partial_t + \ell_\varepsilon)(\zeta) + 2\varepsilon f \zeta'v'\zeta \overline{v}
+ 2\varepsilon f \zeta'\overline{v'}\zeta v \\
& \quad - \partial_t|\zeta v|^2 - \partial_x|\zeta v|^2.
\end{align*} 
Integrating in the spatial variable, integrating by parts and Cauchy-Schwarz inequality, lead to the estimate
\begin{align*}
& \left|2\varepsilon \int_{-\pi}^\pi f(x)|(\zeta v)'(t,x)|^2dx \right| \\
& \leq c_{10}\left(\int_{-\pi}^\pi |v(t,x)|^2dx + \int_{-\pi}^\pi |\partial_tv(t,x)|^2dx + \int_{-\pi}^\pi |(\zeta v)'(t,x)v(t,x)|dx\right) \\
& \leq \left(c_{10}+\frac{c_{10}}{\delta}\right)\int_{-\pi}^\pi |v(t,x)|^2dx + c_{10}\int_{-\pi}^\pi |\partial_tv(t,x)|^2dx + \delta c_{10}\int_{-\pi}^\pi |(\zeta v)'(t,x)|^2dx
\end{align*}
for $\delta>0$.
Here and below $c_j>0$ are constants which only depend on $\varepsilon$, $\zeta$ and $f$. Choosing $\delta>0$ sufficiently small enables us to move the last term on the right-hand side to the left. Integrating in $t$ yields
\begin{equation} \label{est}
\int_0^T \!\!\int_{-\pi}^\pi\!\! |(\zeta v)'(t,x)|^2dx\, dt \leq c_{11}\!\!
\left(\int_0^T \!\! \int_{-\pi}^\pi\!\! |v(t,x)|^2dx\,dt + \int_0^T
\!\! \int_{-\pi}^\pi\!\! |\partial_tv(t,x)|^2dx\,dt\right) \!\!.
\end{equation}
Since $u_k-u_j \in \CT_T$ and $(\partial_t + \ell_\varepsilon)(u_k - u_j) = 0$ for any $k,j \in \Nn$, on applying
 \eqref{est} with $v = u_k - u_j$ we obtain an estimate where $c_{11}$ is independent of $k$ and $j$. 
 Since $\{u_k\}$ and $\{\partial_tu_k\}$ are Cauchy sequences, we conclude 
 that also $\{\partial_x(\zeta u_k)\}_k$ is a Cauchy sequence in $L^2([0,T]\times \Ito)$ for each fixed 
 $\zeta$ whose support is contained in $(0,T)\times [\Imi\cup\Ipl]$. Thus $\partial_xu \in 
 L^2_{\loc}\big((0,T)\times [\Imi\cup\Ipl]\big)$.
 
The equation \eqref{cutoff} implies that $\{\partial_x^2(\zeta u_k)\}_k$ is also a Cauchy sequence and, by an analogous reasoning as before, $\partial_x^2u\in  L^2_{\loc}\big((0,T)\times [\Imi\cup\Ipl]\big)$. 
 Thus $u \in \CT_T$
  for any finite $T > 0$. Moreover, since $u_k$ solves \eqref{B} with initial condition $h_k$, Lemma~\ref{closed} implies that $u$ solves \eqref{B} with initial condition $h$. 
\end{proof}

\section*{Acknowledgements} We kindly acknowledge support from MOPNET,
CANPDE and EPSRC grant 113242.

\end{document}